\documentclass[12pt]{article}
\usepackage{amsfonts}
\usepackage{amsmath}
\usepackage{mathrsfs}
\usepackage{color}
\usepackage{tikz}
\usepackage{mathrsfs,amscd,amssymb,amsthm,amsmath,bm,graphicx,psfrag,subfigure,url}

\textheight23cm \textwidth16cm \hoffset-2cm \voffset-1.3cm
\parskip 2pt plus1pt minus1pt

\usepackage{indentfirst}

\def \[{\begin{equation}}
\def \]{\end{equation}}

\newtheorem{thm}{Theorem}[section]

\newtheorem{lem}[thm]{Lemma}

\newtheorem{pb}[thm]{Problem}

\newenvironment{wst}
{\setlength{\leftmargini}{1.5\parindent}
 \begin{itemize}
 \setlength{\itemsep}{-1.1mm}}
{\end{itemize}}

\begin{document}
\title{\bf The maximum number of stars in a graph without linear forest}

\author{Sumin Huang\footnote{email: sumin2019@sina.com (S.M.~Huang)},\ \  Jianguo Qian\footnote{Corresponding author, email: jgqian@xmu.edu.cn (J.G,~Qian)}\\
\ \\
School of Mathematical Sciences, Xiamen
University, \\
Xiamen 361005, P.R. China}
\date{}
\maketitle

\begin{abstract}
For two graphs $J$ and $H$, the generalized Tur\'{a}n number, denoted by $ex(n,J,H)$, is the maximum number of copies of $J$ in an $H$-free graph of order $n$. A linear forest $F$ is the disjoint union of paths. In this paper, we determine the number $ex(n,S_r,F)$ when $n$ is large enough and characterize the extremal graphs attaining  $ex(n,S_r,F)$, which generalizes the results on $ex(n, S_r, P_k)$, $ex(n,K_2,(k+1) P_2)$ and $ex(n,K^*_{1,r},(k+1) P_2)$. Finally,  we pose the problem whether the extremal graph for $ex(n,J,F)$ is isomorphic to that for  $ex(n,S_r,F)$, where $J$ is any graph such that the number of $J$'s in any graph $G$ does not decrease by shifting operation on $G$.
\end{abstract}

\vspace{2mm} \noindent{\bf Keywords}: Tur\'{a}n number; Generalized Tur\'{a}n number; Star; Linear forest
\vspace{2mm}

\setcounter{section}{0}
\section{Introduction}\setcounter{equation}{0}
For a graph $G$, the vertex set and edge set of $G$ are denoted as usual by $V(G)$ and  $E(G)$, respectively. For a vertex $v$ of $G$, we denote by $N_G(v)$ the set of the neighbors of $v$ in $G$, that is, the set of the vertices adjacent to $v$. The degree of a vertex $v$ is defined as $d_G(v)=|N_G(v)|$. The minimum degree of the vertices in $G$ is denoted by $\delta(G)$. For a positive integer $n$, we denote by $P_n$, $S_{n-1}$ and $K_n$ the path, star and the complete graph of order $n$, respectively. For two positive integers $s$ and $t$, we denote by  $K_{s,t}$ and $K^*_{s,t}$ the complete bipartite graph and the graph obtained from $K_{s,t}$  by replacing the part of size $s$ by a clique of the same size, respectively.

For two disjoint graphs $G$ and $H$, the \textit{union} of $G$ and $H$, denoted by $G\cup H$, is the graph having vertex set $V(G)\cup V(H)$ and edge set $E(G)\cup E(H)$. Similarly, the \textit{join} of $G$ and $H$ is the graph having vertex set $V(G)\cup V(H)$ and edge set $E(G)\cup E(H)\cup \{(u,v) : u\in V(G), v\in V(H)\}$ and is denoted by $G\vee H$. Further, for any positive integer $k$, we write the disjoint union of $k$ copies of a graph $H$ as $kH$. A graph is called a {\it linear forest} if it is  the  disjoint union of paths or isolated vertices. For a subgraph $H$ of a graph $G$, we denote by $G-H$ the graph obtained from $G$ by removing all vertices in $V(H)$ and by $E(H,G-H)$ the set of the edges joining the vertices between $H$ and $G-H$. For a class  $\mathcal{H}$ of graphs, we say that a graph $G$ is $\mathcal{H}$\textit{-free} if $G$ does not contain any $H\in \mathcal{H}$ as a subgraph (not necessarily induced). In particular, if $\mathcal{H}$ consists of a single graph $H$ then we write $\mathcal{H}$ as $H$ for simplicity.

For a graph $H$, let $ex(n,H)$ denote the maximum number of edges in an $H$-free graph of order $n$. The problem of determining the number $ex(n,H)$ is an elementary topic in extremal graph theory, which originates from the well-known Tur\'{a}n's theorem, i.e., $ex(n,K_{t})=|E(T_{t-1,n})|$, where $T_{t-1,n}$ is the Tur\'{a}n graph. Due to  Tur\'{a}n's work, the number $ex(n,K_{t})$, or more generally the number $ex(n,H)$, is named after him, i.e., the {\it Tur\'{a}n number}. Instead of the number of edges, a natural extension for Tur\'{a}n number is to consider the number of copies of an arbitrary graph. For two graphs $J$ and $H$, let $ex(n,J,H)$ denote the maximum number of copies of $J$  in an $H$-free graph of order $n$. In earlier 1962, Erd\H{o}s \cite{Erdos} determined the number $ex(n,K_m,K_{t})$ for any positive integers $m$ and $t$. In 2008, Bollob\'{a}s and Gy\H{o}ri \cite{Bollobas} estimated that $ex(n, K_3, C_5)=\Theta(n^{3/2})$. In general,  Alon and Shikhelman \cite{Alon} estimated that $ex(n, J,H)=\Theta(n^m)$, where $J$ and $H$ are any two graphs and  $m$ is an integer depending on $J$ and $H$.

In the past ten years or more, in addition to complete graphs, the linear forests received much attention. Indeed, this topic may date back to an earlier result given by Erd\H{o}s and Gallai \cite{Erdos2} which, in terms of Tur\'{a}n number, determines $ex(n,K_2,M_{k+1})$, where $M_{k+1}$ is the matching with $k+1$ edges, i.e., the linear forest of $k+1$ edges. In \cite{Lidicky}, Lidick\'{y} et al. determined the number  $ex(n,K_2, F)$ when $n$ is large enough, where $F$ is an arbitrary linear forest and $F\neq t P_3$. Later in \cite{Yuan}, Yuan and Zhang determined $ex(n,K_2,t P_3)$ for all $n$ and $t$, which, together with the former result of  Lidick\'{y} et al., gives the complete determination of the number $ex(n,K_2,F)$ for any linear forest $F$. Recently,  Gy\H{o}ri et. \cite{Gyori} determined the values of $ex(n, C_4, P_k),ex(n, P_3, P_k)$ and $ex(n, S_r, P_k)$ for $r\geq 2, k\geq 3$ and sufficiently large $n$.  By using shifting method, Wang \cite{Wang} determined $ex(n,J,M_{k+1})$ where $J$ is one of  $K_s$ and $K^*_{s,t}$, which extends the result on $ex(n,K_2,M_{k+1})$ of Erd\H{o}s and Gallai. More recently, Zhu et al. \cite{Zhu} determined the number $ex(n,K_s,F)$ for any $s\geq 2$ and any forest $F$ consisting of paths of even order.

Recently, instead of a single graph, research interests of Tur\'{a}n problem also focused on a class of graphs. For a class $\mathcal{H}$ of graphs, the {\it generalized Tur\'{a}n number}  $ex(n,J,\mathcal{H})$ is defined as the  maximum number  of copies of $J$ in an $\mathcal{H}$-free graph of order $n$. For two positive integers $n$ and $k$, let $\mathcal{L}_{n,k}$ be the class of all linear forests with $n$ vertices and $k$ edges and let $\mathcal{L}_{n}^{k}=\cup_{i=n-k+1}^{n-1} \mathcal{L}_{n,i}$. The  number of $ex(n,K_2,\mathcal{L}_{n}^{k})$ is determined by Wang and Yang in \cite{Wang2} for $n\geq 3k$. Further,  Ning and Wang \cite{Ning} pointed out that $ex(n,K_2,\mathcal{L}_{n}^{k})=ex(n,K_2,\mathcal{L}_{n,n-k+1})$ and determined the value of $ex(n,K_2, \mathcal{L}_{n,k})$ for any $n$ and $k$. More recently, using shifting method, Zhang et al. \cite{Zhang} generalized the above two results by extending $K_2$ to $K_s$ and $K^*_{s,t}$.

%{\color{red}Note that $ex(n,T,\mathcal{H})\leq ex(n,T,H)$ for each $H\in \mathcal{H}$. If we can find an $H_0\in \mathcal{H}$ such that a extremal graph with respect to $ex(n,T,H_0)$ is also an $\mathcal{H}$-free graph, then $ex(n,T,\mathcal{H})=ex(n,T,H_0)$. This implies that the problem of determining the generalized Tur\'{a}n number of $\mathcal{L}_{n,k}$ can be reduced to that of linear forest. Hence, it is still important to study the generalized Tur\'{a}n number of linear forest.}

\begin{figure}[htp]
\begin{center}
\includegraphics[width=120mm]{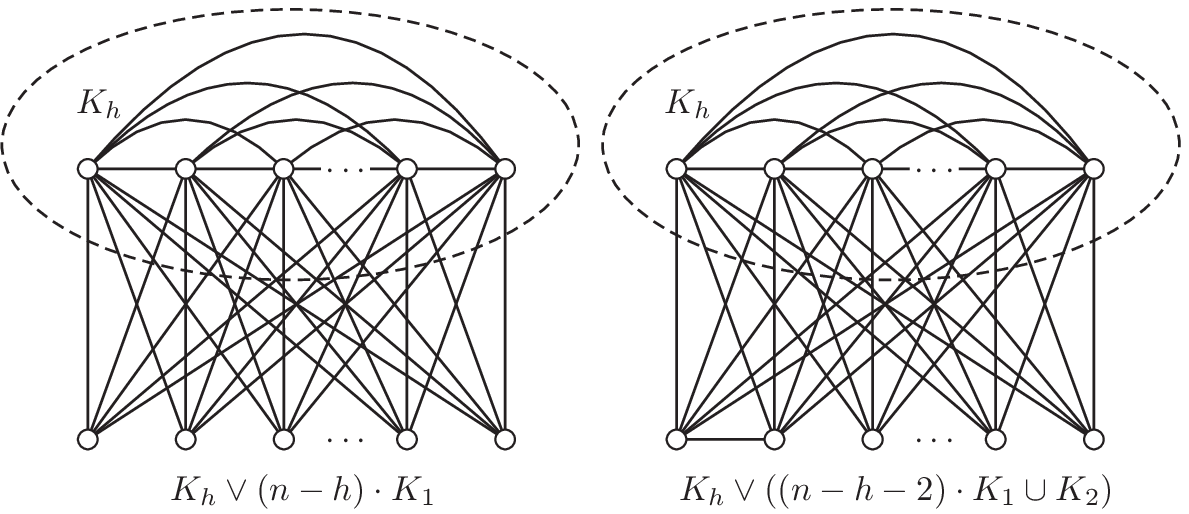}
\caption{$G_F(n)$}\label{fig1}
\end{center}
\end{figure}

Given $t$ positive integers $k_1, k_2,\ldots ,k_t$, let $h=-1+\sum_{i=1}^{t}\left\lfloor \frac{k_i}{2}\right\rfloor$ and $F=\cup_{i=1}^{t} P_{k_i}$. If at least one of $k_1, k_2,\ldots ,k_t$ is not equal to $3$, then let $G_F(n)=K_h\vee (n-h) K_1$ if at least one $k_i$ is even and $G_F(n)=K_h\vee ((n-h-2) K_1\cup K_2)$ otherwise, see Figure \ref{fig1}. In particular, let $G_{tP_3}(n)=K_h\vee \frac{n-h}{2}K_2$ if $(n-h)$ is even and $G_{tP_3}(n)=K_h\vee (\frac{n-h-1}{2}K_2\cup K_1)$ otherwise. Since each path $P_k$ in $G_F(n)$ covers at least $\left\lfloor k/2 \right\rfloor$ vertices in $K_h$, $G_F(n)$ is an $F$-free graph. Similarly, $G_{tP_3}(n)$ is a $tP_3$-free graph.

In this paper, we determine the number $ex(n, S_r, \cup_{i=1}^{t} P_{k_i})$ for $n$ large enough and show that $G_F(n)$ is the unique extremal graph if $F\neq tP_3$ and $G_{tP_3}(n)$ is the unique extremal graph if $F=tP_3$, which generalizes the result on $ex(n, S_r, P_k)$ given in \cite{Gyori}. Moreover, since $M_{k+1}=(k+1) P_2$ is a linear forest, our result implies that of Erd\H{o}s and Gallai \cite{Erdos2} on $ex(n,K_2,M_{k+1})$ and Wang \cite{Wang} on $ex(n,K^*_{1,r},M_{k+1})$.

\begin{thm}\label{th1}
Let $k_1, k_2,\ldots ,k_t$ and $r$ be integers no less than $2$, $h=\sum_{i=1}^{t}\left\lfloor \frac{k_i}{2}\right\rfloor-1$ and $F=\cup_{i=1}^{t} P_{k_i}$. Suppose $n$ is sufficiently large.
\begin{wst}
\item[{\rm 1).}] If at least one of $k_1, k_2,\ldots ,k_t$ is not equal to $3$, then
$$ex(n,S_r,\bigcup_{i=1}^{t} P_{k_i})=h \binom{n-1}{r}+(n-h) \binom{h}{r}+2 \eta_F \binom{h}{r-1},$$
where $\eta_F=1$ if all $k_i$ are odd and $\eta_F=0$ otherwise. Further, $G_F(n)$ is the unique extremal graph attains $ex(n,S_r,\cup_{i=1}^{t} P_{k_i})$.

\item[{\rm 2).}]
$$ex(n,S_r,tP_3)=h \binom{n-1}{r}+(n-h) \binom{h+1}{r}+\tau_{n,h} \left (\binom{h}{r}-\binom{h+1}{r}\right ),$$
where $\tau_{n,h}=0$ if $2|(n-h)$ and $\tau_{n,h}=1$ otherwise. Further, $G_{tP_3}(n)$ is the unique extremal graph attains $ex(n,S_r,tP_3)$.
\end{wst}
\end{thm}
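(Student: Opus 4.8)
The plan is to verify the lower bound by direct computation and then to prove the matching upper bound, with uniqueness, through a structural analysis of $F$-free graphs carrying many copies of $S_r$. For the lower bound, since $G_F(n)$ and $G_{tP_3}(n)$ are already known to be $F$-free, it only remains to count copies of $S_r$ in them. Writing $N_{S_r}(G):=\sum_{v\in V(G)}\binom{d_G(v)}{r}$ and using that the $h$ vertices of the $K_h$-part have degree $n-1$, the vertices lying in a $K_2$ (respectively in a matching edge) have degree $h+1$, and every other vertex has degree $h$, the claimed expressions drop out after applying $\binom{h+1}{r}=\binom{h}{r}+\binom{h}{r-1}$; in particular this produces exactly the correction terms $2\eta_F\binom{h}{r-1}$ and $\tau_{n,h}\bigl(\binom{h}{r}-\binom{h+1}{r}\bigr)$.

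For the upper bound, let $G$ be an $F$-free graph on $n$ vertices, $n$ large, with $N_{S_r}(G)$ maximum. The crux is the structural claim: there is a set $A\subseteq V(G)$ with $|A|=h$, each vertex of $A$ adjacent to all other vertices, such that $G-A$ is $P_3$-free — hence a disjoint union of edges and isolated vertices — with moreover $e(G-A)=0$ if some $k_i$ is even, $e(G-A)\le 1$ if all $k_i$ are odd and $F\neq tP_3$, and $e(G-A)$ unrestricted if $F=tP_3$. I would prove this in three stages. (i) Let $A_0$ be the set of vertices of degree $\ge n-n^{3/4}$. Then $|A_0|\le h$: any $h+1$ of them have pairwise common neighbourhoods of size $(1-o(1))n$, and interleaving these $h+1$ ``hubs'' with fresh common neighbours — assigning $\lfloor k_i/2\rfloor$ hubs to $P_{k_i}$, for a total of exactly $h+1$ — embeds $F$. (ii) In fact $|A_0|=h$ and every vertex outside $A_0$ has degree at most a constant $C=C(F)$: once $|A_0|=h$, one further vertex of degree $\ge C$ supplies an $(h+1)$-st usable hub and embeds $F$; and if $|A_0|\le h-1$ then $A_0$ contributes at most $(h-1)\binom{n-1}{r}$ to $N_{S_r}(G)$, while the cheap bound $e(G)=O(n)$ (forced by $F$-freeness, since a graph with $\Omega(n)$ average degree contains every fixed forest) controls the remaining degrees tightly enough to push $N_{S_r}(G)$ strictly below $N_{S_r}(G_F(n))$; the only delicate point is a vertex of degree just below $n-n^{3/4}$, which still loses a term of order $n^{r-1/4}$ against $\binom{n-1}{r}$ and so falls short. (iii) With $A:=A_0$, $|A|=h$ and $\Delta(G-A)\le C$, one upgrades bounded degree to $P_3$-freeness: a $P_3$ in $G-A$ prolongs through one vertex of $A$ and fresh vertices into some $P_{k_j}$ using only $\lfloor k_j/2\rfloor-1$ vertices of $A$ (a $P_3$ ``saves'' one hub, as does a single edge absorbed into an even path, and two disjoint edges chained through one vertex of $A$ inside an odd path of length $\ge5$), so the remaining paths are built with the other $h$ vertices of $A$ and $F\subseteq G$ — unless the offending $P_3$ or extra edges lie in a small component disconnected from $A$, which the same extremality/counting estimate excludes. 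Since $F\neq tP_3$ forces an odd $k_j\ge5$ in the all-odd case, two edges in $G-A$ would embed $F$; while for $F=tP_3$ every $P_3$ of a copy of $tP_3$ must meet $A$ (as $G-A$ is a matching, hence $P_3$-free), so a large matching in $G-A$ is harmless.

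Given the claim the conclusion is pure counting: each $v\notin A$ has $d_G(v)=|N_A(v)|+d_{G-A}(v)\le h+d_{G-A}(v)$ with $d_{G-A}(v)\in\{0,1\}$, so with $\eta':=e(G-A)$ one gets $N_{S_r}(G)\le h\binom{n-1}{r}+2\eta'\binom{h+1}{r}+(n-h-2\eta')\binom{h}{r}$. For $F\neq tP_3$ the right-hand side is maximized at $\eta'=\eta_F$, yielding part (1); for $F=tP_3$ it is increasing in $\eta'$, hence maximized by making $G-A$ a maximum matching on $n-h$ vertices, yielding part (2). In each case equality forces every vertex of $A$ to have degree $n-1$, every vertex outside $A$ to be joined to all of $A$, and $G-A$ to realize the extremal edge-count, i.e.\ $G\cong G_F(n)$ (respectively $G\cong G_{tP_3}(n)$), which also gives uniqueness.

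The main obstacle is stage (ii): the bound $|A_0|\le h$ is clean, but extracting $|A_0|=h$ forces one to combine extremality with the linear bound on $e(G)$ and then to control lower-order terms (of order $n^{r-1}$ versus $n$, which genuinely matters when $r=2$), because an $F$-free graph can a priori carry many medium-degree vertices, or a vertex of degree $\Theta(n)$ outside $A_0$, without literally containing $F$. For $F\neq tP_3$ one can streamline stages (i)–(ii) with the shifting operation, which does not decrease $N_{S_r}$ (convexity of $\binom{\cdot}{r}$) and — the technical heart of that route — preserves $F$-freeness precisely when $F\neq tP_3$; one then assumes $G$ shift-stable, so $G=K_s\vee H$ with $\Delta(H)<n-s-1$, and analyzes $H$ as above. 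This route genuinely cannot cover $tP_3$, since $G_{tP_3}(n)$ is not shift-stable — its outer matching collapses under shifting into a higher-degree vertex that creates a copy of $tP_3$ — which is also the structural reason $tP_3$ is a real exception.
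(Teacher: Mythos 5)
Your route is genuinely different from the paper's (which inducts on the number of paths $t$, settles the base case $t=2$ by combining the Chen--Zhang stability classification of Lemma~\ref{lem0} with a minimum-degree reduction, and in the inductive step peels off one path together with a high-codegree set $U$ as in Lemma~\ref{cla1}), but it has a genuine gap at its central step, stage~(ii), and the gap is not just the technical nuisance you flag --- two of the sub-arguments there are false as stated. First, the claim that a vertex $w\notin A_0$ of degree at least a \emph{constant} $C(F)$ ``supplies an $(h+1)$-st usable hub'' does not work: the $h$ vertices of $A_0$ only have degree $\ge n-n^{3/4}$, so their common neighbourhood misses up to $hn^{3/4}$ vertices, and a constant-degree $w$ need not have a single neighbour inside that common neighbourhood; the embedding argument only kicks in once $d(w)\gg n^{3/4}$, so stage~(ii) at best yields $\Delta(G-A_0)=O(n^{3/4})$, not $O(1)$, and stage~(iii) as written then has nothing to stand on. Second, the claim that when $|A_0|\le h-1$ the bound $e(G)=O(n)$ ``controls the remaining degrees tightly enough'' fails: an $F$-free graph may have $e(G)$ as large as roughly $hn$, which permits several vertices of degree $(1-\tfrac{1}{h+2})n$; such vertices are not excluded by your stage-(i) hub argument (whose threshold is $n-\Theta(n/h)$), and for $r=2$ already two of them contribute $2(1-\tfrac{1}{h+2})^2\binom{n}{2}>\binom{n-1}{2}$ copies of $S_2$, so the proposed counting does not push $\mathcal{N}(S_r,G)$ below $\mathcal{N}(S_r,G_F(n))$.

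What is really needed at this point is a structural statement excluding ``medium-high degree'' configurations in $F$-free graphs --- for instance a codegree-deficiency bound of the form $\sum_{i=1}^{h+1}\bigl(n-1-d(v_i)\bigr)\ge n-O(1)$ for any $h+1$ vertices, which does follow from your hub embedding and recovers the leading term $h\binom{n-1}{r}$, but which by itself still does not pin down the lower-order terms $(n-h)\binom{h}{r}+2\eta_F\binom{h}{r-1}$ nor the uniqueness of the extremal graph. That finer stability is exactly what the paper imports from Lemma~\ref{lem0} in the two-path case and then propagates by induction; your sketch has no substitute for it. Stage~(i), the lower-bound computation, and the final counting conditional on the structural claim are all fine; the proof is missing its main ingredient at stage~(ii).
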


At the end of the article, we pose the problem whether the extremal graph for $ex(n,J,F)$ is isomorphic to that for  $ex(n,S_r,F)$, where $J$ is any graph such that the number of $J$'s in any graph $G$ does not decrease by shifting operation on $G$.

\section{Lemmas}
In this section, we introduce some necessary lemmas for the proof of Theorem \ref{th1}.

Following  \cite{Chen}, we define  some special graphs. For two positive integers $t_1$ and $t_2$, let $L_{t_1,t_2,h,h+1}=K_1\vee (t_1 K_h \cup t_2 K_{h+1})$ and call the only vertex of $K_1$ the center. In particular, we write $L_{t,0,h,h+1}=L_{t,h}$. Let $F_{t_1,t_2,h,h+1}$ be the graph of order $t_1h+(t_2+1)(h+1)+1$ obtained from $L_{t_1,t_2,h,h+1}$ and $K_{h+1}$ by adding an edge joining the center of $L_{t_1,t_2,h,h+1}$ and a vertex of $K_{h+1}$. Further, for $h\geq 2$, let $T_{t_1,t_2,h,h+1}$ be the graph of order $t_1h+(t_2+2)(h+1)+1$ obtained from $2K_{h+1}$ and $L_{t_1,t_2,h,h+1}$ by adding an edge joining the center of $L_{t_1,t_2,h,h+1}$ and a vertex of each $K_{h+1}$, respectively. For $n\geq 7$, let $H_n^1$ be the graph of order $n$ obtained from  $K_2\vee (n-4) K_1$ and $K_3$ by identifying a vertex in $K_2$ of $K_2\vee (n-4) K_1$ with a vertex in $K_3$. For $n\geq 9$, let $H_n^2$ be the graph of order $n$ obtained from  $H_{n-2}^1$ and $K_3$ by identifying a vertex that has the second largest degree in $H_{n-2}^1$, with a vertex in $K_3$. Let $U_{3,h}$ be the graph of order $3h+3$ obtained from $3K_{h+1}$ by adding three new edges $uv,vw$ and $uw$, where $u,v,w$ are in the three $K_{h+1}$'s, respectively.

\begin{lem}\label{lem0}\cite{Chen}
Let $G$ be a graph of order $n$, $F=P_a\cup P_b$ and $h=\left\lfloor \frac{a}{2}\right\rfloor+\left\lfloor \frac{b}{2}\right\rfloor-1$, where $a$ and $b$ are two positive integers.  If $\delta(G)\geq h$, then $F\subseteq G$, unless one of the following holds.
\begin{wst}
\item[{\rm 1).}] $G\subseteq G_F(n)$;
\item[{\rm 2).}] $a$ and $b$ are both even and $a=b,G\cong L_{t,h},n=th+1$;
\item[{\rm 3).}] $|a-b|=1,G\cong L_{t,h}$ and $n=th+1$;
\item[{\rm 4).}] $a$ and $b$ are both odd and $|a-b|=2,G\cong L_{t,h},n=th+1$;
\item[{\rm 5).}] $a$ and $b$ are both odd and $a=b,G\cong U_{3,h},n=3h+3$;
\item[{\rm 6).}] $a$ and $b$ are both odd and $a=b,G\subseteq L_{t_1,t_2,h,h+1},n=t_1 h+t_2 (h+1)+1$;
\item[{\rm 7).}] $a$ and $b$ are both odd and $a=b,G\subseteq F_{t_1,t_2,h,h+1},n=t_1 h+(t_2+1)(h+1)+1$;
\item[{\rm 8).}] $a$ and $b$ are both odd and $a=b,G\subseteq T_{t_1,t_2,h,h+1},n=t_1 h+(t_2+2)(h+1)+1$;
\item[{\rm 9).}] $n$ is even, $G\subseteq K_2 \vee \left(\frac{n-2}{2}\right) K_2$ and either $\{a,b\}=\{6,3\}$ or $\{7,3\}$;
\item[{\rm 10).}] $n$ is odd, $\{a,b\}=\{9,3\}$ and $G\subseteq K_3 \vee \left(\frac{n-3}{2}\right) K_2$;
\item[{\rm 11).}] $\{a,b\}=\{5,3\}$ and either $G\subseteq H_n^1$ or $G\subseteq H_n^2$.
\end{wst}
\end{lem}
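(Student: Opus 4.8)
The plan is to assume $\delta(G)\ge h$ together with $P_a\cup P_b\not\subseteq G$, with $a\ge b$ fixed, and to force $G$ into one of the eleven listed shapes. Two mechanisms drive the exceptional graphs and should guide the whole argument. The first is a small clique core: in $G_F(n)=K_h\vee(n-h)K_1$ every copy of $P_k$ must use at least $\lfloor k/2\rfloor$ core vertices, so two disjoint paths would need $\lfloor a/2\rfloor+\lfloor b/2\rfloor=h+1>h$ of them, which is impossible; this is condition 1). The second is a decomposition into small cliques $K_h,K_{h+1}$ glued along a few hub vertices through which every long path is forced to pass, giving the families $L_{t,h}$, $L_{t_1,t_2,h,h+1}$, $F_{t_1,t_2,h,h+1}$, $T_{t_1,t_2,h,h+1}$ and $U_{3,h}$. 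The task is to show these are the only ways $F$ can be avoided.

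The central lever is a longest path $P=x_0x_1\cdots x_p$ in $G$. Maximality forces $N(x_0)\cup N(x_p)\subseteq V(P)$, so $p\ge h$, and the usual rotation argument over the two endpoints either lengthens $P$ or shows that the vertices it covers induce a near-complete set. If $P$ has at least $a+b$ vertices, I take $x_0\cdots x_{a-1}$ as $P_a$ and $x_{p-b+1}\cdots x_p$ as $P_b$; these are disjoint exactly when $p\ge a+b-1$, so $F\subseteq G$. Hence I may assume the longest path is short, $p+1\le a+b-1$. This is precisely the borderline regime where $2\lfloor a/2\rfloor+2\lfloor b/2\rfloor-1=2h+1$ falls just short of $a+b$, and it is where all the structure lives.

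Assuming $P$ short, I would analyse how $G$ hangs off $P$. If $G-V(P)$ already contains a $P_b$, it may be combined with a $P_a$ inside $P$ (the longest path is long enough to contain $P_a$ since $\delta\ge h\ge\lfloor a/2\rfloor$), yielding $F$; so all but a bounded amount of $G$ must lie on or adjacent to $P$. Then the $\ge h$ neighbours of each endpoint, all trapped on $P$, force the ends of $P$ to behave like clique vertices, and iterating this pins the component containing $P$ down to a union of $K_h$'s and $K_{h+1}$'s sharing the hub vertices through which $P$ passes. Tracking the parity of $a$ and $b$---which governs exactly how much slack a single hub affords---then separates conditions 2) through 8): the both-even, differ-by-one, and both-odd-with-$|a-b|=2$ cases admit only the single-hub families $L_{t,h}$, whereas the richest case $a=b$ odd, with the largest slack, produces $U_{3,h}$, $L_{t_1,t_2,h,h+1}$, $F_{t_1,t_2,h,h+1}$ and $T_{t_1,t_2,h,h+1}$.

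I expect the main obstacle to be the tightness and completeness of this enumeration rather than the generic embedding step: one must verify both that each listed graph really omits $F$ while keeping $\delta\ge h$, and, conversely, that no $F$-free graph with $\delta\ge h$ outside the core family $G_F(n)$ escapes the clique-gluing description. The sporadic pairs $\{a,b\}\in\{\{5,3\},\{6,3\},\{7,3\},\{9,3\}\}$ sit below the range in which the clean clique-block structure is effective, so conditions 9) through 11)---including $H_n^1$, $H_n^2$, $K_2\vee\frac{n-2}{2}K_2$ and $K_3\vee\frac{n-3}{2}K_2$---will require bespoke finite case checks on the few ways $P_a$ and $P_b$ can be routed, which is exactly what their irregular appearance in the statement reflects.
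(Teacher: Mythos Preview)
The paper does not prove this lemma at all: it is quoted verbatim from Chen and Zhang \cite{Chen} (the Erd\H{o}s--Gallai stability theorem for linear forests), as the citation marker in the lemma heading indicates. There is therefore no proof in the present paper to compare your proposal against; the authors simply import the classification and use it as a black box in the proof of Lemma~\ref{lem2}.

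As a standalone sketch your plan is broadly the right shape---longest-path plus endpoint rotations is indeed the engine behind the Chen--Zhang result---but what you have written is a roadmap, not a proof. The substantive work in \cite{Chen} lies precisely in the parts you wave at: showing that when the longest path is short the component must decompose into $K_h$'s and $K_{h+1}$'s glued at at most one or two hubs, and then exhaustively ruling out every other attachment pattern case by case across the parity combinations of $(a,b)$. Your paragraph ``iterating this pins the component \ldots\ down to a union of $K_h$'s and $K_{h+1}$'s'' is exactly the step that requires several pages of careful argument, and your treatment of the sporadic cases 9)--11) as ``bespoke finite case checks'' is accurate but again defers all the content. If you intend to reprove the lemma rather than cite it, you would need to supply those arguments in full; otherwise, the correct move here is simply to cite \cite{Chen}, as the paper does.
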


\begin{lem}\label{lemn1}\cite{Gyori}
Let $k$ be an integer with $k\geq 3$ and let $h=\left\lfloor \frac{k}{2}\right\rfloor-1$. If $n$ is sufficiently large, then
$$ex(n,S_r,P_k)=h \binom{n-1}{r}+(n-h) \binom{h}{r}+2 \eta_k \binom{h}{r-1},$$
where $\eta_k=1$ if $k$ is odd and $\eta_k=0$ otherwise.
\end{lem}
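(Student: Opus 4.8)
\medskip
\noindent\textbf{Proof proposal.} The plan is to count copies of $S_r$ through their centres: for $r\ge 2$ every copy of $S_r$ has a unique centre (its only vertex of degree $\ge 2$ inside the star), so a graph $G$ contains exactly $\sum_{v\in V(G)}\binom{d_G(v)}{r}$ copies of $S_r$. For the lower bound I would evaluate this sum on $G_F(n)$ with $F=P_k$. When $k$ is even, $K_h\vee (n-h)K_1$ has $h$ vertices of degree $n-1$ and $n-h$ vertices of degree $h$, giving $h\binom{n-1}{r}+(n-h)\binom{h}{r}$. When $k$ is odd, the two vertices of the extra $K_2$ have degree $h+1$, so Pascal's identity $\binom{h+1}{r}=\binom{h}{r}+\binom{h}{r-1}$ turns $(n-h-2)\binom{h}{r}+2\binom{h+1}{r}$ into $(n-h)\binom{h}{r}+2\binom{h}{r-1}$, which is the stated value with $\eta_k=1$. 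Since each $P_k$ must meet $K_h$ in at least $\lfloor k/2\rfloor=h+1$ vertices (the sparse part contributes at most one adjacent pair along a path), $G_F(n)$ is $P_k$-free, so $ex(n,S_r,P_k)$ is at least the right-hand side. The whole difficulty lies in the matching upper bound.

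For the upper bound, let $G$ be a $P_k$-free graph on $n$ vertices maximising $\sum_v\binom{d_G(v)}{r}$. The engine is the single-path specialisation of the structural dichotomy behind Lemma~\ref{lem0} (equivalently, the classical Erd\H{o}s--Gallai/Kopylov description of dense connected $P_k$-free graphs): a connected graph on at least $k$ vertices with minimum degree at least $\lfloor k/2\rfloor=h+1$ contains $P_k$, apart from a short list of exceptional graphs (friendship-type graphs of the form $K_1\vee tK_h$ and finitely many small ones), each of which is either of the join shape $K_h\vee(\text{sparse})$ or carries only $O(n)$ copies of $S_r$. I would first use the Erd\H{o}s--Gallai edge bound $e(G)\le \tfrac{k-2}{2}n$ to control the total degree, and then run a stability argument: repeatedly deleting vertices of current degree at most $h$ and applying the dichotomy to the resulting core forces $G$ to be \emph{approximately} $K_h\vee\overline{K}_{n-h}$, in the sense that there is a set $W$ with $|W|\le h$ carrying essentially all the degree, while $\sum_{v\notin W}\binom{d_G(v)}{r}=o\!\left(\binom{n-1}{r}\right)$.

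The heart of the argument, and the step I expect to be the main obstacle, is upgrading this approximate structure to an exact one, since the target is exact down to the additive constant $2\eta_k\binom{h}{r-1}$ and no purely asymptotic estimate suffices. Here I would show that in the extremal $G$ the set $W$ consists of exactly $h$ vertices each adjacent to all others, and that every vertex outside $W$ sends only boundedly many edges into $V(G)\setminus W$. The universality of $W$ should follow from a local exchange: any shortfall at an apex can be repaired by adding a missing edge at that apex and deleting an edge in the residual graph, which strictly increases $\sum_v\binom{d_v}{r}$ by convexity; one must check this move never creates a $P_k$, which is the delicate point. Writing $R=G-W$ for the residual graph on $n-h$ vertices, the requirement that $K_h\vee R$ be $P_k$-free becomes a sharp constraint on $R$, because $h$ apices can stitch at most $h+1$ paths of $R$ into a single path, so the longest path of $K_h\vee R$ has $2h+1$ vertices plus one extra for each edge of $R$ it traverses; hence $R$ must be edgeless when $k$ is even and may carry at most a single edge when $k$ is odd.

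Finally I would read off the count: with $W$ universal and $R$ as constrained, $\sum_v\binom{d_v}{r}=h\binom{n-1}{r}+\sum_{v\in R}\binom{h+d_R(v)}{r}$, and since $\binom{h+1}{r}-\binom{h}{r}=\binom{h}{r-1}>0$ this is maximised by inserting as many edges of $R$ as allowed, namely none for even $k$ and exactly one for odd $k$, which produces the two stated expressions and identifies $\eta_k$ with the parity of $k$. The equality analysis forces $R$ to realise this optimum uniquely, giving $G\cong G_F(n)$ and hence uniqueness of the extremal graph. The two places demanding the most care are the exchange step that makes $W$ exactly universal without introducing a $P_k$, and verifying that the contributions from $R$ and from any exceptional core are genuinely dominated at every scale ($n^r$, $n$, and constant), so that the inequality holds exactly rather than merely asymptotically.
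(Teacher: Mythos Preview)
The paper does not prove this lemma at all: it is quoted verbatim as a result of Gy\H{o}ri, Salia, Tompkins and Zamora \cite{Gyori}, so there is no in-paper proof to compare against. Your lower-bound paragraph is fine and in fact matches the paper's Lemma~\ref{lemn} specialised to a single path.

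For the upper bound, your plan has the right shape but the step you yourself flag as ``delicate'' is a genuine gap. The exchange move---add a missing apex edge $wv$ and delete some edge of $R$---does not come with any mechanism for ensuring $P_k$-freeness: the new edge $wv$ can participate in many new long paths through $v$, and removing a single edge of $R$ need not kill all of them. Convexity gives you the gain in the star count, but you have no control over the forbidden subgraph, so the argument stalls exactly where you say it does. The subsequent analysis of $R$ (stitching at most $h+1$ pieces through the $h$ apices) is correct \emph{once} $W$ is known to be universal, but you never get there.

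A route that avoids this difficulty, and is the one the paper itself uses for the two-path analogue in Lemma~\ref{lem2}, is to split on the minimum degree. If $\delta(G)\ge h$, invoke the single-path structural classification (the $t=1$ analogue of Lemma~\ref{lem0}, i.e.\ the Kopylov/Erd\H{o}s--Gallai stability for $P_k$-free graphs) to force $G\subseteq G_F(n)$ up to a finite list of exceptional families, each with strictly fewer stars for large $n$. If $\delta(G)<h$, delete a minimum-degree vertex $v$ and observe that the drop $\mathcal{N}(S_r,G)-\mathcal{N}(S_r,G-v)=\binom{d(v)}{r}+\sum_{u\in N(v)}\binom{d(u)-1}{r-1}$ is strictly smaller than $\mathcal{N}(S_r,G_F(n))-\mathcal{N}(S_r,G_F(n-1))=\binom{h}{r}+h\binom{n-1}{r-1}$, so a finite descent on $n$ reduces to the first case. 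This replaces your unsupported exchange by a clean monotonicity comparison.
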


For two graphs $J$ and $G$, we denote by  $\mathcal{N}(J,G)$ the number of copies of $J$ in $G$. In particular, $\mathcal{N}(S_0,G)=|V(G)|$. We note that the number of $S_r$'s in any graph $G$ equals $\sum_{v\in V(G)}\binom{d(v)}{r}$. So by the definitions of $G_F(n)$ and $G_{tP_3}(n)$, the following two lemmas are immediate.
\begin{lem}\label{lemn}
Let  $k_1, k_2,\ldots ,k_t$ be $t$ positive integers, $F=\cup_{i=1}^{t} P_{k_i}$ and $h=\sum_{i=1}^{t} \left\lfloor \frac{k_i}{2}\right\rfloor-1$. If $n$ is sufficiently large, then
$$\mathcal{N}(S_r, G_F(n))=h \binom{n-1}{r}+(n-h) \binom{h}{r}+2 \eta_F \binom{h}{r-1},$$
where $\eta_F=1$ if all $k_i$ are odd and $\eta_F=0$ otherwise.
\end{lem}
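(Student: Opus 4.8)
The plan is a direct computation from the degree sequence of $G_F(n)$, using the identity $\mathcal{N}(S_r,G)=\sum_{v\in V(G)}\binom{d_G(v)}{r}$ recorded just before the statement. I would split into the two cases in the definition of $G_F(n)$ and, in each, write down the degrees of all $n$ vertices and sum.

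If at least one $k_i$ is even, then $G_F(n)=K_h\vee (n-h)K_1$, so the $h$ vertices of the clique each have degree $n-1$, while each of the remaining $n-h$ vertices has degree $h$ (for $n$ large these vertex classes are nonempty and have exactly the stated sizes). Summing $\binom{d_G(v)}{r}$ over all vertices gives exactly $h\binom{n-1}{r}+(n-h)\binom{h}{r}$, which is the claimed value since $\eta_F=0$ here.

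If every $k_i$ is odd, then $G_F(n)=K_h\vee\big((n-h-2)K_1\cup K_2\big)$: the $h$ clique vertices have degree $n-1$, the $n-h-2$ isolated vertices of the $K_1$-part have degree $h$, and the two vertices of the $K_2$ have degree $h+1$. Hence $\mathcal{N}(S_r,G_F(n))=h\binom{n-1}{r}+(n-h-2)\binom{h}{r}+2\binom{h+1}{r}$, and it remains to reconcile this with the stated formula. Applying Pascal's rule $\binom{h+1}{r}=\binom{h}{r}+\binom{h}{r-1}$ rewrites $2\binom{h+1}{r}$ as $2\binom{h}{r}+2\binom{h}{r-1}$, and combining the two $\binom{h}{r}$ contributions yields $(n-h)\binom{h}{r}+2\binom{h}{r-1}$, matching the claim with $\eta_F=1$.

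There is essentially no obstacle: the only points worth a sentence are that $n$ is taken large enough for $G_F(n)$ to be defined and for the vertex classes to have the indicated cardinalities, and the single use of Pascal's identity in the all-odd case. (An entirely analogous one-line degree count, now with the $K_2$-factors each contributing degree $h+2$ and one possible leftover $K_1$, settles the companion statement for $G_{tP_3}(n)$.)
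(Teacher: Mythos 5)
Your computation is exactly what the paper intends: it states this lemma as ``immediate'' from the identity $\mathcal{N}(S_r,G)=\sum_{v\in V(G)}\binom{d(v)}{r}$ and the definition of $G_F(n)$, and your case split with the Pascal's-rule reconciliation $2\binom{h+1}{r}=2\binom{h}{r}+2\binom{h}{r-1}$ is the correct way to spell that out. No gaps.
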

\begin{lem}\label{lemnp}
Let  $t$ be an positive integers and $h=t-1$. If $n$ is sufficiently large, then
$$\mathcal{N}(S_r, G_{tP_3}(n))=h \binom{n-1}{r}+(n-h) \binom{h+1}{r}+\tau_{n,h} \left (\binom{h}{r}-\binom{h+1}{r}\right ),$$
where $\tau_{n,h}=0$ if $(n-h)$ is even and $\tau_{n,h}=1$ otherwise.
\end{lem}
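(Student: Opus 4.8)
The plan is to compute $\mathcal{N}(S_r,G_{tP_3}(n))$ directly from the identity $\mathcal{N}(S_r,G)=\sum_{v\in V(G)}\binom{d_G(v)}{r}$ by reading off the degree sequence of $G_{tP_3}(n)$ in each of the two cases of its definition. Recall that, with $h=t-1$, we have $G_{tP_3}(n)=K_h\vee\frac{n-h}{2}K_2$ when $n-h$ is even and $G_{tP_3}(n)=K_h\vee\bigl(\frac{n-h-1}{2}K_2\cup K_1\bigr)$ when $n-h$ is odd, so every vertex outside the clique $K_h$ lies in a $K_2$, except for the single $K_1$-vertex in the odd case. The hypothesis that $n$ is sufficiently large is used only to ensure $n-h-1\geq 0$, so that these graphs are well defined.

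When $2\mid(n-h)$, each of the $h$ vertices of $K_h$ is joined to the other $h-1$ vertices of $K_h$ and, via the join, to all $n-h$ remaining vertices, hence has degree $n-1$; each of the $n-h$ vertices lying in a $K_2$ is joined to its matching partner and to all $h$ vertices of $K_h$, hence has degree $h+1$. Summing $\binom{d(v)}{r}$ over $V(G_{tP_3}(n))$ gives $h\binom{n-1}{r}+(n-h)\binom{h+1}{r}$, which is the claimed value since $\tau_{n,h}=0$ in this case.

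When $2\nmid(n-h)$, the $h$ vertices of $K_h$ still have degree $n-1$ and the $n-h-1$ vertices lying in some $K_2$ still have degree $h+1$, while the unique $K_1$-vertex is adjacent only to the $h$ vertices of $K_h$ and so has degree $h$; summing yields $h\binom{n-1}{r}+(n-h-1)\binom{h+1}{r}+\binom{h}{r}$, and writing $(n-h-1)\binom{h+1}{r}=(n-h)\binom{h+1}{r}-\binom{h+1}{r}$ rewrites this as $h\binom{n-1}{r}+(n-h)\binom{h+1}{r}+\bigl(\binom{h}{r}-\binom{h+1}{r}\bigr)$, matching the statement with $\tau_{n,h}=1$. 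There is essentially no obstacle here: the lemma is a bookkeeping fact about the degree sequence of an explicit graph, and the only points requiring care are not double-counting the clique and join edges in the degree computation and selecting the correct graph according to the parity of $n-h$.
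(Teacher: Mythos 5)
Your computation is correct and is exactly the argument the paper intends: it declares the lemma ``immediate'' from the identity $\mathcal{N}(S_r,G)=\sum_{v\in V(G)}\binom{d_G(v)}{r}$ and the definition of $G_{tP_3}(n)$, which is precisely the degree-sequence bookkeeping you carry out in both parity cases.
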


In the following, we introduce an operation that will be used in our forthcoming discussion, which is also frequently used in extremal graph theory and  extremal set theory. Let $G$ be a graph with vertex set $\{1,2,\cdots,n\}$. For a vertex $i$ and an edge $e$, by $i\in e$ we mean that $i$ is an end vertex of $e$. Further,  for two vertices $i,j$ and an edge $e$ with $j\in e$  and $i\notin e$, by $(e-\{j\})\cup\{i\}$ we mean the edge obtained from $e$ by replacing the end vertex $j$ by $i$. For two integers $i$ and $j$ with $1\leq i<j\leq n$ and $e\in E(G)$, we define the $ij$-\textit{shifting} operation on $e$ as follows:
$$ S_{ij}(e)=\left\{
\begin{aligned}
& (e-\{j\})\cup\{i\}, ~ \text{if}~ j\in e, ~ i\notin e ~ \text{and} ~ (e-\{j\})\cup\{i\}\notin E(G);\\
& e,~ \text{otherwise.} \\
\end{aligned}\right.
$$
Define $S_{ij}(G)$ to be a graph with vertex set $V(G)$ and edge set $\{S_{ij}(e): e\in E(G)\}$. We show that a shifting operation does not reduce the number of $S_r$'s.

\begin{lem}\label{lem3}
For any two vertices $i$ and $j$ with $1\leq i<j\leq n$, $\mathcal{N}(S_r,G)\leq \mathcal{N}(S_r,S_{ij}(G))$, with equality if and only if $S_{ij}(G)\cong G$.
\end{lem}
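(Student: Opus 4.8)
The plan is to compare $\mathcal{N}(S_r,G)$ with $\mathcal{N}(S_r,S_{ij}(G))$ by examining how the degree sequence changes under the $ij$-shifting operation and then appealing to the convexity of the function $x\mapsto\binom{x}{r}$. Write $G'=S_{ij}(G)$. First I would observe that the only vertices whose degree can change are $i$ and $j$: every edge that is actually moved by $S_{ij}$ has the form $e$ with $j\in e$, $i\notin e$, and loses the endpoint $j$ in favour of $i$, so $d_{G'}(i)\ge d_G(i)$, $d_{G'}(j)\le d_G(j)$, and $d_{G'}(v)=d_G(v)$ for all $v\notin\{i,j\}$. Moreover $d_{G'}(i)+d_{G'}(j)=d_G(i)+d_G(j)$, since each moved edge contributes one to the new degree of $i$ exactly as it removes one from the degree of $j$ (the edge $ij$ itself, if present, is never moved). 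Let $a=d_G(i)$, $b=d_G(j)$ and let $s$ be the number of edges actually shifted, so $d_{G'}(i)=a+s$, $d_{G'}(j)=b-s$; a standard argument (any edge $e=\{j,x\}$ with $x\ne i$ and $\{i,x\}\notin E(G)$ gets shifted) shows that necessarily $a+s\ge b-s$ after the operation, i.e. the larger degree gets larger and the smaller gets smaller.

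Next I would use the identity $\mathcal{N}(S_r,G)=\sum_{v\in V(G)}\binom{d_G(v)}{r}$ quoted just before the lemma. Since all degrees except those of $i,j$ are unchanged,
$$\mathcal{N}(S_r,G')-\mathcal{N}(S_r,G)=\left[\binom{a+s}{r}+\binom{b-s}{r}\right]-\left[\binom{a}{r}+\binom{b}{r}\right].$$
Because $x\mapsto\binom{x}{r}$ is convex on the nonnegative integers and the pair $(a+s,\,b-s)$ is obtained from $(a,b)$ by moving $s$ units from the value that ends up smaller to the value that ends up larger — i.e. the new pair majorizes the old one while keeping the same sum — this difference is nonnegative. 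Concretely, I would write it as a telescoping sum $\sum_{\ell=0}^{s-1}\bigl[\binom{a+\ell+1}{r}-\binom{a+\ell}{r}\bigr]-\sum_{\ell=0}^{s-1}\bigl[\binom{b-\ell}{r}-\binom{b-\ell-1}{r}\bigr]=\sum_{\ell=0}^{s-1}\bigl[\binom{a+\ell}{r-1}-\binom{b-\ell-1}{r-1}\bigr]$ using Pascal's rule, and note each summand is $\ge0$ because $a+\ell\ge b-s\ge b-\ell-1$ for $0\le\ell\le s-1$ together with monotonicity of $\binom{\cdot}{r-1}$. Hence $\mathcal{N}(S_r,G)\le\mathcal{N}(S_r,G')$.

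For the equality case: if $S_{ij}(G)\cong G$ then trivially $\mathcal{N}(S_r,G)=\mathcal{N}(S_r,G')$. Conversely, suppose $G'\not\cong G$; then at least one edge was actually shifted, so $s\ge1$. If $a\ge b$ then the $\ell=0$ term is $\binom{a}{r-1}-\binom{b-1}{r-1}>0$ as long as $a\ge r-1$ (and if $a<r-1$ one checks the count of $S_r$'s centred at $i$ strictly increases anyway, or argues directly that some $S_r$ is created); in the remaining borderline cases a short direct count of stars centred at $i$ and at $j$ shows the total strictly increases. The cleanest route, which I would adopt, is: the edge $\{i,x_0\}$ created by the shift together with the $s-1$ other new edges at $i$ and the (at least) old edges at $i$ gives $d_{G'}(i)=a+s>a$, and since $d_{G'}(j)=b-s$ with $a+s\ge b-s$, convexity gives strict inequality unless $r>a+s$, which would force $\binom{a}{r}=\binom{b}{r}=\binom{a+s}{r}=\binom{b-s}{r}=0$ and then one must instead track lower-order star counts — but for the statement as used later only $r$ with $2\le r$ and $n$ large matters, so I would simply note that in the regime of interest all relevant binomials are in the strictly convex range. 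I expect the main obstacle to be precisely this equality discussion: handling small-$r$/small-degree degeneracies of $\binom{x}{r}$ cleanly, which I would resolve by the telescoping-sum identity above, since it isolates exactly which summand is positive and ties strictness to $s\ge1$ plus $d_{G'}(i)\ge r-1$, the latter being automatic once one observes that a shift at $i$ means $i$ already had positive degree and gains more.
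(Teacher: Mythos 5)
Your strategy is essentially the paper's: only the degrees of $i$ and $j$ change under $S_{ij}$, their sum is preserved, and the inequality reduces to $\binom{a+s}{r}+\binom{b-s}{r}\geq\binom{a}{r}+\binom{b}{r}$ via convexity of $x\mapsto\binom{x}{r}$. (The paper writes the same identity in terms of $n_i=|N_G(i)\backslash N_G(j)|$, $n_j=|N_G(j)\backslash N_G(i)|$ and $n_{ij}=|N_G(i)\cap N_G(j)|$; your $(a,b,s)$ is $(n_i+n_{ij},\,n_j+n_{ij},\,n_j)$ up to the edge $ij$.) Your majorization argument for this inequality is correct, but your ``concrete'' telescoping verification is not: after Pascal's rule the summands are $\binom{a+\ell}{r-1}-\binom{b-\ell-1}{r-1}$, and the chain $a+\ell\geq b-s\geq b-\ell-1$ that you invoke fails for $\ell<s-1$, since $b-s\geq b-\ell-1$ is equivalent to $\ell\geq s-1$. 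Individual summands can indeed be negative (take $a=0$, $b=s=5$: the $\ell=0$ summand is $\binom{0}{r-1}-\binom{4}{r-1}$). The repair is to pair the increments in the opposite order, writing the difference as $\sum_{\ell=0}^{s-1}\bigl[\binom{a+\ell}{r-1}-\binom{b-s+\ell}{r-1}\bigr]$, where every summand is nonnegative because $a\geq b-s$.

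The more substantive gap is the equality case, which the lemma asserts as an ``if and only if'' and which your proposal raises but never closes. What is missing is the observation (which the paper supplies) that if $n_i=0$ or $n_j=0$ then no edge is moved or $i$ and $j$ simply exchange neighbourhoods, so $S_{ij}(G)\cong G$; combined with strictness when $n_i,n_j>0$ this yields the stated equivalence. Your worry about the degenerate regime is nevertheless legitimate: when $r$ exceeds the post-shift degree of $i$, all four binomials vanish and equality holds even though $S_{ij}(G)\not\cong G$ (e.g.\ $G=2K_2$ with $r=3$), so strictness actually requires $d_{S_{ij}(G)}(i)\geq r$ in addition to $n_i,n_j>0$. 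The paper's proof silently asserts strictness without this proviso, so you have spotted a real (if harmless in the applications, where the shifted vertex always ends up with large degree) defect in the statement; but deferring to ``the regime of interest'' is not a proof of the lemma as written --- you should either record the extra hypothesis or verify it where the lemma is invoked.
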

\begin{proof}
Let $n_i=|N_G(i)\backslash N_G(j)|$, $n_j=|N_G(j)\backslash N_G(i)|$ and $n_{ij}=|N_G(i)\cap N_G(j)|$. By the definition of $ij$-shifting, we notice that the degree of the vertex $i$ changes from $n_i+n_{ij}$ to $n_i+n_{ij}+n_j$,   $j$ changes from $n_i+n_{ij}$ to $n_{ij}$ and all other vertices does not change. Hence,
\begin{align}
\mathcal{N}(S_r,S_{ij}(G))-\mathcal{N}(S_r,G)=\binom{n_i+n_j+n_{ij}}{r}+\binom{n_{ij}}{r}-\binom{n_i+n_{ij}}{r}-\binom{n_j+n_{ij}}{r}.\label{eqs}
\end{align}
If $n_i>0$ and $n_j>0$, then \eqref{eqs} implies $\mathcal{N}(S_r,S_{ij}(G))>\mathcal{N}(S_r,G)$. If $n_i=0$ or $n_j=0$, then by the definition of $S_{ij}(G)$, $S_{ij}(G)\cong G$ and $\mathcal{N}(S_r,G)= \mathcal{N}(S_r,S_{ij}(G))$, as desired.
\end{proof}

\begin{lem}\label{lem2}
Let $a$ and $b$ be two positive integers with $(a,b)\neq(3,3)$, $F=P_a\cup P_b$ and $h=\left\lfloor \frac{a}{2}\right\rfloor+\left\lfloor \frac{a}{2}\right\rfloor-1$. For any integer $r$ with $r\geq 2$, if $G$ is an $F$-free graph of order $n$, then $\mathcal{N}(S_r,G)\leq\mathcal{N}(S_r,G_{F}(n))$ when $n$ is large enough, with equality if and only if $G\cong G_F(n)$.
\end{lem}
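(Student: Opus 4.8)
I would prove, by induction on $n$, that every $F$-free graph $G$ of order $n$ satisfies $\mathcal{N}(S_r,G)\le\mathcal{N}(S_r,G_F(n))$, with equality only when $G\cong G_F(n)$; since $(a,b)\ne(3,3)$ we have $F\ne 2P_3$, so $\mathcal{N}(S_r,G_F(n))$ is exactly the quantity in Lemma~\ref{lemn}. The first move is to reduce to the case $\delta(G)\ge h$. If some vertex $v$ has $d(v)\le h-1$, then the copies of $S_r$ meeting $v$ number $\binom{d(v)}{r}+\sum_{u\in N(v)}\binom{d(u)-1}{r-1}\le\binom{h-1}{r}+(h-1)\binom{n-2}{r-1}$; applying the induction hypothesis to the $F$-free graph $G-v$ of order $n-1$ and using the identity $\mathcal{N}(S_r,G_F(n))-\mathcal{N}(S_r,G_F(n-1))=h\binom{n-2}{r-1}+\binom{h}{r}$ (read off from Lemma~\ref{lemn}), I obtain $$\mathcal{N}(S_r,G)\le\mathcal{N}(S_r,G_F(n-1))+\binom{h-1}{r}+(h-1)\binom{n-2}{r-1}<\mathcal{N}(S_r,G_F(n)),$$ the strict inequality holding for large $n$ because the surplus of the right side is $\binom{n-2}{r-1}+\binom{h-1}{r-1}>0$. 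Hence any $G$ with a vertex of degree at most $h-1$ obeys the bound strictly and is not $G_F(n)$, so from now on I may assume $\delta(G)\ge h$.

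Assuming $\delta(G)\ge h$ and that $G$ is $F$-free, Lemma~\ref{lem0} places $G$ inside $G_F(n)$ or inside one of its explicit companion graphs, each of which is assembled from cliques by joins and disjoint unions, so its degree sequence --- and thus $\mathcal{N}(S_r,G)=\sum_{v}\binom{d(v)}{r}$ --- is transparent. The graph $U_{3,h}$ has the single fixed order $3h+3$ and so does not occur for $n$ large. For the families $L_{t,h}$, $L_{t_1,t_2,h,h+1}$, $F_{t_1,t_2,h,h+1}$, $T_{t_1,t_2,h,h+1}$ there is at most one vertex of degree $n-1$ and every other degree is at most $h+1$, whence $\mathcal{N}(S_r,G)\le\binom{n-1}{r}+(n-1)\binom{h+1}{r}$, which for large $n$ stays below $\mathcal{N}(S_r,G_F(n))\ge h\binom{n-1}{r}$ as soon as $h\ge 2$; here the hypothesis $(a,b)\ne(3,3)$ is exactly what forces $h\ge 2$ in the three $L/F/T$ families (which require $a=b$ both odd), while $G\cong L_{t,h}$ with $h=1$ is literally $S_{n-1}=G_F(n)$. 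Finally, for the sporadic subgraphs of $K_c\vee\tfrac{n-c}{2}K_2$ with $c\in\{2,3\}$ (coming from $\{a,b\}\in\{\{6,3\},\{7,3\},\{9,3\}\}$, for which $c\le h-1$) and of $H_n^1$ or $H_n^2$ (from $\{a,b\}=\{5,3\}$, so $h=2$), reading off the degree sequence leaves at most $h-1$ vertices of degree $n-1$ --- plus one further vertex of degree $n-3$ in the $H_n^i$ cases --- so $\mathcal{N}(S_r,G)<\mathcal{N}(S_r,G_F(n))$ for $n$ large, since $G_F(n)$ has $h$ vertices of degree $n-1$.

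This leaves only $G\subseteq G_F(n)$. Since $\mathcal{N}(S_r,\cdot)$ does not decrease when edges are added, $\mathcal{N}(S_r,G)\le\mathcal{N}(S_r,G_F(n))$; and if $G\ne G_F(n)$ then some edge of $G_F(n)$ is absent, an edge meeting a vertex of $G_F(n)$ of degree $n-1\ge r$, whose degree in $G$ is therefore strictly smaller, so the inequality is strict. This closes the induction and gives the lemma, with $G_F(n)$ the unique extremal graph.

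The genuine work --- and where a slip would hide --- is the second paragraph: Lemma~\ref{lem0} lists eleven configurations, all of which must be disposed of, the four sporadic pairs $\{a,b\}\in\{\{5,3\},\{6,3\},\{7,3\},\{9,3\}\}$ requiring their own (elementary) estimates, and one must locate precisely where $(a,b)\ne(3,3)$ enters --- namely only to guarantee $h\ge 2$ in the ``$a=b$ both odd'' families, which is what makes the leading term $h\binom{n-1}{r}$ of $\mathcal{N}(S_r,G_F(n))$ strictly beat the single $\binom{n-1}{r}$ that any one-apex configuration contributes. A secondary nuisance is making the base of the induction rigorous (choosing $n$ large enough for all the ``$n$ large'' comparisons simultaneously, and handling the finitely many smaller orders by a direct argument), together with the $h=1$ identification $L_{t,1}=G_F(n)$ needed for uniqueness; these are routine but must not be skipped.
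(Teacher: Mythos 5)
Your Case~1 (minimum degree at least $h$) is essentially the paper's own argument: invoke Lemma~\ref{lem0}, read off the degree sequences of the eleven configurations, and check that $G_F(n)$ wins for large $n$, with $(a,b)\neq(3,3)$ entering exactly where you locate it. The genuine gap is in your reduction of the case $\delta(G)<h$. You package the vertex-deletion estimate as a downward induction on $n$ and defer the base case as a routine matter of ``handling the finitely many smaller orders by a direct argument.'' No such direct argument exists, because the inequality $\mathcal{N}(S_r,G)\le\mathcal{N}(S_r,G_F(n))$ is false at small orders: $K_n$ is $F$-free whenever $n<a+b$, and for $h<n<a+b$ it beats $G_F(n)$ (for $a=b=4$, $n=7$, $r=2$ one gets $105$ versus $57$). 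So the induction cannot be grounded from below, and a graph whose minimum degree stays below $h$ under repeated deletion drags you into precisely that regime; the base case is not a nuisance but the crux. The paper circumvents this with a different device: setting $f(n)=ex(n,S_r,F)-\mathcal{N}(S_r,G_F(n))\ge 0$, it shows that whenever an order-$n$ extremal graph has minimum degree below $h$ one gets $f(n)<f(n-1)$; a nonnegative integer sequence cannot decrease strictly forever, so $f$ is eventually identically zero and every extremal graph of large order has minimum degree at least $h$, whereupon Case~1 finishes. Your estimate can alternatively be repaired without that trick by iterating the deletion and keeping books: each step accrues a surplus of at least $\binom{n'-2}{r-1}$ over the corresponding increment of $\mathcal{N}(S_r,G_F(\cdot))$, and the surplus $\binom{n-2}{r-1}$ from the first step alone already dominates the bounded excess $\mathcal{N}(S_r,H)-\mathcal{N}(S_r,G_F(|H|))\le |H|\binom{|H|-1}{r}$ of whatever constant-order graph the process terminates at. As written, though, the proposal does not close.

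A secondary remark on your uniqueness step: you assert that any missing edge of $G_F(n)$ meets a vertex of degree $n-1$. When $a$ and $b$ are both odd, the edge of the pendant $K_2$ in $G_F(n)=K_h\vee((n-h-2)K_1\cup K_2)$ does not; deleting it changes the count by $2\binom{h}{r-1}$, which vanishes when $r>h+1$, so strictness needs a separate word there. The paper's own uniqueness claim (``any proper subgraph of $G_F(n)$ has strictly fewer $S_r$'s'') has the same blind spot, so this is a shared issue rather than a defect of your route specifically.
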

\begin{proof}
Let $G$ be a graph satisfying $\mathcal{N}(S_r,G)=ex(n,S_r,F)$.  We proceed by considering the following two possible cases.

{\bf{Case 1.}} $\delta(G)\geq h$.

Since $G$ is $F$-free and $\mathcal{N}(S_r,G)=ex(n,S_r,F)$, in order to show that $G\cong G_F(n)$, it suffices to compare the numbers of $S_r$ in those graphs as indicated in the 11 cases of Lemma \ref{lem0}.

If $h=1$, then $(a,b)=(2,2)$ or $(2,3)$ or $(3,2)$. So by Lemma \ref{lem0}, either $G\subseteq G_F(n)$ or $G\cong L_{t,1}$. In this case, $L_{t,1}\cong G_F(n)$ and, hence, the lemma follows immediately.

Now we consider $h>1$. We notice that  the center of $L_{t,h}$ has degree $n-1$ while all other vertices have degree $h$. This means that $\mathcal{N}(S_r,L_{t,h})=\binom{n-1}{r}+(n-1)\binom{h}{r}$. So by Lemma \ref{lemn}, one has
\begin{align*}
\mathcal{N}(S_r, G_F(n))-\mathcal{N}(S_r,L_{t,h})&=~(h-1) \binom{n-1}{r}+(1-h) \binom{h}{r}+2 \eta_F \binom{t}{r-1}\\
&=~(h-1) n^r + o(n^{r-1}),
\end{align*}
which is greater than $0$ when $n$ is large enough.

Similarly, it can be verified that
\begin{align*}
\mathcal{N}(S_r,U_{3,h})=&~3 \binom{h}{r}+ 6 \binom{h}{r-1}+3 \binom{h}{r-2}+ 3h \binom{h}{r},\\
\mathcal{N}(S_r,L_{t_1,t_2,h,h+1})=&~t_1 h \binom{h}{r}+ t_2 (h+1) \binom{h+1}{r}+ \binom{n-1}{r},\\
\mathcal{N}(S_r,F_{t_1,t_2,h,h+1})=&~(t_1+1) h \binom{h}{r}+ (t_2 (h+1)+1) \binom{h+1}{r}+ \binom{n-h-1}{r},\\
\mathcal{N}(S_r,T_{t_1,t_2,h,h+1})=&~(t_1+2) h \binom{h}{r}+ (t_2 (h+1)+2) \binom{h+1}{r}+ \binom{n-2h-1}{r}.
\end{align*}
For each $H\in \{U_{3,h}, L_{t_1,t_2,h,h+1}, F_{t_1,t_2,h,h+1}, T_{t_1,t_2,h,h+1}\}$, since the coefficient of the highest power of $n$, i.e., $n^r$, in $\mathcal{N}(S_r, G_F(n))$ is greater than that in $\mathcal{N}(S_r, H)$, it follows that $\mathcal{N}(S_r, G_F(n))> \mathcal{N}(S_r, H)$ when $n$ is large enough.

Finally, If $\{a,b\}=\{6,3\}$ or $\{7,3\}$ and $n$ is even, then $h=3$. In this case, $\mathcal{N}(S_r, G_F(n))$ $=3 \binom{n-1}{r}+(n-3) \binom{3}{r}+2 \eta_F \binom{3}{r-1}$. Note that $\mathcal{N}(S_r, K_2 \vee \left(\frac{n-2}{2}\right) K_2)= 2 \binom{n-1}{r}+(n-2) \binom{3}{r}$. It is clear that $\mathcal{N}(S_r, G_F(n))>\mathcal{N}(S_r, K_2 \vee \left(\frac{n-2}{2}\right) K_2)$ when $n$ is large enough. Similarly, we have $\mathcal{N}(S_r, G_F(n))>\mathcal{N}(S_r, K_2 \vee \left(\frac{n-2}{2}\right) K_2)$ (resp. $\mathcal{N}(S_r, G_F(n))>\mathcal{N}(S_r, H_n^1)$ and $\mathcal{N}(S_r, G_F(n))>\mathcal{N}(S_r, H_n^2)$) when $\{a,b\}=\{9,3\}$ (resp. $\{a,b\}=\{5,3\}$) and $n$ is large enough.

So by Lemma  \ref{lem0}, we have $G\subseteq G_F(n)$. Further, notice that $\mathcal{N}(S_r, H)<\mathcal{N}(S_r, G_F(n))$ for any proper subgraph $H$ of $G_F(n)$. This implies that $G\cong G_F(n)$ since $G$ attains $ex(n,S_r,F)$ and $G_F(n)$ is $F$-free.

{\bf{Case 2.}} $\delta(G)<h$.

Let $v$ be a vertex of $G$ with $d(v)=\delta(G)<h$. Note that each vertex has degree at most $n-1$. We have
\begin{align*}
\mathcal{N}(S_r,G)-\mathcal{N}(S_r,G-v)&=\binom{d(v)}{r}+\sum_{u\in N(v)}\binom{d(u)}{r-1}\\
&<~\binom{h}{r}+h\binom{n-1}{r-1}\\
&=~\mathcal{N}(S_r,G_F(n))-\mathcal{N}(S_r,G_F(n-1)),
\end{align*}
where the last equality holds by  Lemma \ref{lemn} and a direct calculation. Since $G-v$ is still an $F$-free graph of order $n-1$, we have $\mathcal{N}(S_r,G-v)\leq ex(n-1,S_r,F)$. Hence,
\begin{align}
ex(n,S_r,F)-ex(n-1,S_r,F)<\mathcal{N}(S_r,G_F(n))-\mathcal{N}(S_r,G_F(n-1)).\label{eq1}
\end{align}

Let $f(n)=ex(n,S_r,F)-\mathcal{N}(S_r,G_F(n))$. It is clear that $f(n)\geq 0$. To prove our result, it is sufficient to show that $f(n)=0$ and $G\cong G_F(n)$ for large enough $n$. \eqref{eq1} implies that $f(n)< f(n-1)$. Since $f(n-1)$ is finite and $f(i)\geq 0$ for any positive integer $i$, there must exist an integer $m$ with $m\geq n$ such that $f(m)=0$.  This means that $G_F(m)$ attains $ex(m,S_r,F)$. Let $G_m$ be an arbitrary extremal graph of order $m$ that attains $ex(m,S_r,F)$. We must have $\delta(G_m)\geq h$, for otherwise, we would have $f(m+1)<f(m)=0$, a contradiction. So by the discussion in Case 1, $G_m\cong G_F(m)$, as desired.
\end{proof}

\begin{lem}\label{lem2p}
Let $F=2P_3$. For any integer $r$ with $r\geq 2$, if $G$ is an $F$-free graph of order $n$, then $\mathcal{N}(S_r,G)\leq\mathcal{N}(S_r,G_{2P_3}(n))$ when $n$ is large enough, with equality if and only if $G\cong G_{2P_3}(n)$.
\end{lem}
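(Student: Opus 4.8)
The plan is to run the argument of Lemma \ref{lem2} essentially verbatim, now with $h=1$, $a=b=3$, and with $G_{2P_3}(n)$ (and Lemma \ref{lemnp}) playing the role that $G_F(n)$ (and Lemma \ref{lemn}) play there; I only describe where the two arguments diverge. So I would fix a $2P_3$-free graph $G$ of order $n$ with $\mathcal{N}(S_r,G)=ex(n,S_r,2P_3)$ and split according to whether $\delta(G)\ge 1$ or $\delta(G)=0$.

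\textbf{Case 1: $\delta(G)\ge 1$.} Here I would invoke Lemma \ref{lem0} with $a=b=3$ (so $h=1$). Because $a$ and $b$ are both odd and equal and $h=1$, the only alternatives to $2P_3\subseteq G$ left open by that lemma are items 1, 5, 6 and 7 (items 2--4 and 9--11 require other relations between $a$ and $b$, and item 8 is void since $T_{t_1,t_2,h,h+1}$ is only defined for $h\ge 2$): that is, $G\subseteq G_F(n)$, which for $h=1$ is of the form $L_{t_1,t_2,1,2}$; or $G\cong U_{3,1}$ with $n=6$, which is vacuous for large $n$; or $G\subseteq L_{t_1,t_2,1,2}$ with $t_1+2t_2=n-1$; or $G\subseteq F_{t_1,t_2,1,2}$ with $t_1+2t_2=n-3$. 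A one-line count of $\sum_v\binom{d(v)}{r}$ (the centre of $L_{t_1,t_2,1,2}$ has degree $n-1$ and all other vertices degree $1$ or $2$; the centre of $F_{t_1,t_2,1,2}$ has degree $n-2$) gives, for $r\ge 2$,
$$\mathcal{N}(S_r,L_{t_1,t_2,1,2})=\binom{n-1}{r}+2t_2\binom{2}{r},\qquad \mathcal{N}(S_r,F_{t_1,t_2,1,2})=\binom{n-2}{r}+(2t_2+1)\binom{2}{r}.$$
Since $G_{2P_3}(n)$ is exactly $L_{t_1,t_2,1,2}$ with $t_2=\lfloor(n-1)/2\rfloor$ (hence $t_1\in\{0,1\}$), which maximises $2t_2$ under $t_1+2t_2=n-1$, $t_1\ge 0$, the first formula is at most $\mathcal{N}(S_r,G_{2P_3}(n))$, with equality iff $t_1\in\{0,1\}$; and using $\binom{n-1}{r}-\binom{n-2}{r}=\binom{n-2}{r-1}$ together with $2t_2\le n-3$,
$$\mathcal{N}(S_r,G_{2P_3}(n))-\mathcal{N}(S_r,F_{t_1,t_2,1,2})\ \ge\ \binom{n-2}{r-1}+\Big(2\big\lfloor\tfrac{n-1}{2}\big\rfloor-2t_2-1\Big)\binom{2}{r}\ \ge\ \binom{n-2}{r-1}\ >\ 0$$
for $n$ large. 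As $\mathcal{N}(S_r,\cdot)$ does not decrease under taking spanning supergraphs, this shows $\mathcal{N}(S_r,G)\le\mathcal{N}(S_r,G_{2P_3}(n))$, and since $G_{2P_3}(n)$ is $2P_3$-free and $G$ is extremal, equality forces $G\cong G_{2P_3}(n)$.

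\textbf{Case 2: $\delta(G)=0$.} Deleting an isolated vertex $v$ leaves a $2P_3$-free graph $G-v$ of order $n-1$ with $\mathcal{N}(S_r,G-v)=\mathcal{N}(S_r,G)$, so $ex(n,S_r,2P_3)\le ex(n-1,S_r,2P_3)$; on the other hand Lemma \ref{lemnp} gives $\mathcal{N}(S_r,G_{2P_3}(n))-\mathcal{N}(S_r,G_{2P_3}(n-1))>0$ for large $n$ (leading term $\binom{n-2}{r-1}$). Setting $f(n)=ex(n,S_r,2P_3)-\mathcal{N}(S_r,G_{2P_3}(n))\ge 0$, these two facts give $f(n)<f(n-1)$ whenever an extremal graph of order $n$ has an isolated vertex; together with Case 1 (an extremal graph with $\delta\ge 1$ is $G_{2P_3}(n)$, whence $f(n)=0$) this yields, for all large $n$, the dichotomy $f(n)=0$ or $f(n)<f(n-1)$. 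Exactly as in Lemma \ref{lem2}, a nonnegative integer-valued sequence cannot sustain a long strictly decreasing run, so $f(n)=0$ for all sufficiently large $n$; and for such $n$ an extremal graph with $\delta=0$ would force $f(n-1)>f(n)=0$, impossible. Hence $ex(n,S_r,2P_3)=\mathcal{N}(S_r,G_{2P_3}(n))$ and $G_{2P_3}(n)$ is the unique extremal graph.

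\textbf{The main obstacle} is Case 1: one must read off from the eleven exceptional families of Lemma \ref{lem0} exactly which survive at $a=b=3$, and then verify that the \emph{balanced} graph $G_{2P_3}(n)$ beats every competitor. The tightest competitor is $F_{t_1,t_2,1,2}$, and it is dispatched by observing that raising the centre degree from $n-2$ to $n-1$ gains $\binom{n-2}{r-1}$, which dominates the bounded loss coming from the degree-$1$ and degree-$2$ vertices. Everything after Case 1 is the descent/finiteness argument already carried out for Lemma \ref{lem2}, so no new idea is needed there.
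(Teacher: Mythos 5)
Your main case coincides with the paper's: both of you invoke Lemma \ref{lem0} at $a=b=3$, $h=1$, discard the exceptional families by comparing $\sum_v\binom{d(v)}{r}$, and your counts for $L_{t_1,t_2,1,2}$ and $F_{t_1,t_2,1,2}$ are correct (the paper's own list also carries a graph $T_{t_1,t_2,1,2}$, formally undefined for $h=1$ but in any case dispatched exactly like $F_{t_1,t_2,1,2}$ since its centre has degree $n-3$). Where you genuinely diverge is the degenerate case: the paper splits on connected versus disconnected and kills disconnected extremal graphs by moving all edges of the maximum-degree vertices of the other components onto one vertex and appealing to Lemma \ref{lem3}, whereas you split on $\delta(G)\ge 1$ versus $\delta(G)=0$ and rerun the descent on $f(n)=ex(n,S_r,2P_3)-\mathcal{N}(S_r,G_{2P_3}(n))$ from Lemma \ref{lem2}. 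Your split is arguably the more natural one --- Lemma \ref{lem0} is a minimum-degree statement and needs no connectivity, so the paper's ``connected'' branch is already doing the work of your entire Case 1 --- and your Case 2 reuses machinery that is verified elsewhere; both routes are sound for the inequality and for uniqueness when $r=2$.

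There is, however, a step in your Case 1 that fails: the assertion that $\mathcal{N}(S_r,L_{t_1,t_2,1,2})=\binom{n-1}{r}+2t_2\binom{2}{r}$ equals $\mathcal{N}(S_r,G_{2P_3}(n))$ \emph{iff} $t_1\in\{0,1\}$. For $r\ge 3$ one has $\binom{2}{r}=0$, so every $L_{t_1,t_2,1,2}$ --- in particular the star $K_{1,n-1}=L_{n-1,0,1,2}$, which is $2P_3$-free --- attains exactly $\binom{n-1}{r}=\mathcal{N}(S_r,G_{2P_3}(n))$ copies of $S_r$. Hence the extremal \emph{value} is right, but the uniqueness clause of the lemma is simply false for $r\ge 3$ and no argument can recover it. This is not a defect you introduced: the paper's own proof makes the identical move (its displayed comparison of $\mathcal{N}(S_r,L_{t_1-2,t_2+1,1,2})$ with $\mathcal{N}(S_r,L_{t_1,t_2,1,2})$, besides having its inequality sign reversed, is an equality for all $r\ge 3$), so the statement itself needs either the restriction $r=2$ or a weakened uniqueness conclusion. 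Apart from this shared issue, your proposal is correct.
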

\begin{proof}
Let $G$ be an $F$-free graph satisfying $\mathcal{N}(S_r,G)=ex(n,S_r,F)$.

If $G$ is connected, then $\delta(G)\geq 1$.
By Lemma \ref{lemn}, $G\in \{ G_F(n), U_{3,1}, L_{t_1,t_2,1,2}, F_{t_1,t_2,1,2},$ $T_{t_1,t_2,1,2}\}$. Then by similar argument in Lemma \ref{lem2}, we have $G\cong L_{t_1,t_2,1,2}$ for some $t_1$ and $t_2$ with $n=t_1+2t_2+1$. Since $\mathcal{N}(S_r,L_{t_1-2,t_2+1,1,2})\leq \mathcal{N}(S_r,L_{t_1,t_2,1,2})$, $G\cong L_{0,\frac{n-1}{2},1,2}$ if $n$ is odd and $G\cong L_{1,\frac{n-2}{2},1,2}$ if $n$ is even. This implies $G\cong G_{2P_3}(n)$.

If $G$ is disconnected, then let $G_1,\ldots,G_s$  be the components of $G$ with $n_1,\ldots,n_s$ vertices, respectively. In this case, $G_i\cong G_{2P_3}(n_i)$. Let $u_i$ be a vertex with the maximum degree in $G_i$. We can use shifting operation to obtain a graph $G'=G-\sum_{i=2}^{s}\{u_iv:v\in N_G(u_i)\}+\sum_{i=2}^{s}\{u_1v:v\in N_G(u_i)\}$. Note that $G'$ is still an $F$-free graph. By Lemma \ref{lem3}, $\mathcal{N}(S_r,G)<\mathcal{N}(S_r,G')$, which is a contradiction to $\mathcal{N}(S_r,G)=ex(n,S_r,F)$.
\end{proof}

\section{The proof of Theorem \ref{th1}}

In this section, we give the proof of Theorem \ref{th1}.

1). Let $G$ be an $F$-free graph satisfying $\mathcal{N}(S_r,G)=ex(n,S_r,F)$. To prove our result, it is sufficient to show $G\cong G_F(n)$ when $n$ is large enough. We apply induction on $t$. If $t=2$, then 1) follows directly by Lemma \ref{lem2}. We now assume that 1) holds for $t-1$.

Without loss of generality, assume that $k_t$ is a smallest even number in $\{k_1, k_2,\ldots, k_{t}\}$ if every $k_i$ is even or a smallest odd number otherwise. One can see that at least one of $k_1, k_2,\ldots, k_{t-1}$ is not equal to $3$ as $k_i\not=1$ for all $i\in\{1,2,\ldots,t\}$. Further, $k_1, k_2,\ldots, k_{t-1}$ are all odd if $k_1, k_2,\ldots, k_{t}$ are all odd and at least one of $k_1, k_2,\ldots, k_{t-1}$ is even otherwise. Since $t\geq 2$, Lemma \ref{lemn1} and Lemma \ref{lemn} implies that $\mathcal{N}(S_r,G)=ex(n,S_r,F)\geq \mathcal{N}(S_r,G_F(n))> ex(n,S_r,P_{k_t})$, meaning that $G$ contains $P_{k_t}$ as a subgraph. Let $P$ be the subgraph of $G$  induced by $V(P_{k_t})$. Now we give the following claim.
\begin{lem}\label{cla1}
If $n$ is large enough, then $V(P)$ contains a subset $U$ with $|U|=\left\lfloor k_t/2\right\rfloor$ such that
\begin{equation}\label{U}
\left|\left(\bigcap_{u\in U}N_{G}(u)\right)\cap V(G-P)\right|\geq k_1+k_2+\cdots+k_t.
\end{equation}
Moreover, $G-U$ is $(F-P_{k_t})$-free.
\end{lem}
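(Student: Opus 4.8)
The plan is to first pin down the coarse structure of the extremal graph $G$ and then read off $U$ by elementary path--weaving.

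\textbf{Structural step.} Since $G$ is $F$-free, $e(G)\le ex(n,F)=|E(G_F(n))|=hn-\binom{h+1}{2}+O(1)$ (from the known value of $ex(n,K_2,F)$, or directly from the Erd\H{o}s--Gallai theorem), while $\mathcal{N}(S_r,G)=ex(n,S_r,F)\ge\mathcal{N}(S_r,G_F(n))=h\binom{n-1}{r}(1+o(1))$ by Lemma \ref{lemn}; comparing these through the convexity of $\binom{\cdot}{r}$ forces $G$ to have at least $h$ vertices of degree $\Omega(n)$. I then want to sharpen this to: $G$ has a set $B$ of exactly $h$ vertices of degree $n-o(n)$, with the remaining vertices contributing negligibly to $\mathcal{N}(S_r,G)$. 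The place where $F$-freeness enters essentially is the embedding principle that $h+1$ vertices whose pairwise common neighbourhoods all exceed $k_1+\cdots+k_t$ already yield $F\subseteq G$: one embeds each $P_{k_i}$ so that the $\lfloor k_i/2\rfloor$ vertices playing the rôle of the smaller part of its bipartition are among these $h+1$ vertices (using $\sum_i\lfloor k_i/2\rfloor=h+1$), filling the larger parts greedily from the huge common neighbourhoods. Combined with a bootstrapping/stability argument (in the spirit of Case 2 of Lemma \ref{lem2}), this both caps the number of large-degree vertices at $h$ and upgrades $\Omega(n)$ to $n-o(n)$, leaving the remainder essentially edgeless.

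\textbf{Choosing $U$ and $P$.} As $t\ge 2$ and every $k_i\ge 2$ we have $h=\sum_i\lfloor k_i/2\rfloor-1\ge\lfloor k_t/2\rfloor$, so I may take $U\subseteq B$ with $|U|=\lfloor k_t/2\rfloor$; then $W:=\bigcap_{u\in U}N_G(u)$ satisfies $|W|\ge n-\lfloor k_t/2\rfloor\cdot o(n)=n-o(n)$. Realise a copy $P$ of $P_{k_t}$ on the path $v_1v_2\cdots v_{k_t}$ by letting $U$ play the rôle of the smaller part of the bipartition and filling the larger part (of size $\lceil k_t/2\rceil\le|W|$) with distinct vertices of $W\setminus U$: every edge $v_iv_{i+1}$ is present since one endpoint lies in $U$ and the other in $W\subseteq\bigcap_{u\in U}N_G(u)$. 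Then $U\subseteq V(P)$ and $\big(\bigcap_{u\in U}N_G(u)\big)\cap V(G-P)=W\setminus V(P)$ has size at least $n-o(n)-k_t\ge k_1+\cdots+k_t$ for $n$ large, which is \eqref{U}.

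\textbf{The "moreover" clause.} Suppose $G-U$ contained $F-P_{k_t}=\bigcup_{i=1}^{t-1}P_{k_i}$. This copy occupies at most $\sum_{i=1}^{t-1}k_i$ of the at least $\sum_{i=1}^{t}k_i$ vertices of $\big(\bigcap_{u\in U}N_G(u)\big)\cap V(G-P)$, so at least $k_t\ge\lceil k_t/2\rceil$ of them are untouched; weaving a $P_{k_t}$ through $U$ (again as the smaller part) with these untouched common neighbours as the larger part produces a $P_{k_t}$ disjoint from $U$ and from the supposed copy of $F-P_{k_t}$, hence $F\subseteq G$, a contradiction. Therefore $G-U$ is $(F-P_{k_t})$-free. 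The main obstacle is the structural step: naive counting gives only $\ge h$ vertices of degree $\Omega(n)$ and cannot exclude a bounded number of intermediate-degree vertices, so one must exploit $F$-freeness through the greedy-embedding principle (an intermediate-degree vertex together with $h$ near-dominating vertices already forces $F$) and iterate; everything afterwards is the routine weaving above.
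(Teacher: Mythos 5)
Your ``Choosing $U$ and $P$'' and ``moreover'' steps are fine (and the fact that you build your own copy of $P_{k_t}$ rather than working with the copy $P$ fixed in the text is harmless, since only $U$ and the $(F-P_{k_t})$-freeness of $G-U$ are used afterwards). The problem is that everything rests on the structural step, and that step is not actually proved; it is where the entire difficulty of the lemma lives. The convexity-plus-edge-count comparison you invoke gives $\sum_i c_i^r\ge h(1+o(1))$ and $\sum_i c_i\le 2h$ for the scaled degrees $c_i=d_i/n$, which is consistent, e.g.\ for $r=2$, with $4h$ vertices of degree about $n/2$ and no vertex of degree $n-o(n)$ at all. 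Your greedy-embedding principle then does not apply: it needs $h+1$ vertices whose \emph{pairwise} common neighbourhoods are all large, and a family of $\Theta(h)$ sets of size $n/2$ need not contain $h+1$ members that pairwise intersect in more than $k_1+\cdots+k_t$ vertices (an averaging bound on intersections gives no such clique without further work). The ``bootstrapping/stability argument'' that is supposed to upgrade ``$\ge h$ vertices of degree $\Omega(n)$'' to ``exactly $h$ vertices of degree $n-o(n)$, rest negligible'' is named but never carried out, and Case 2 of Lemma \ref{lem2} (minimum-degree reduction) does not supply it. Note also that this structural claim is essentially the conclusion of Theorem \ref{th1} itself, so assuming it to prove the lemma is close to circular.

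The paper's proof goes a different and more robust way, and you should compare: it fixes a copy $P$ of $P_{k_t}$ in $G$, lower-bounds the number of stars crossing between $P$ and $G-P$ by subtracting from $\mathcal{N}(S_r,G)\ge\mathcal{N}(S_r,G_F(n))$ the contributions internal to $G-P$ and the low-crossing stars --- all of which are controlled by the \emph{induction hypothesis} applied to the $(F-P_{k_t})$-free graph $G-P$ --- obtaining a lower bound of order $\frac{1}{r!}\lfloor k_t/2\rfloor n^r$. It then upper-bounds the same quantity by shifting the bipartite graph between $V(P)$ and $V(G-P)$ into a canonical graph $H$ determined by $n_0$, the number of vertices of $G-P$ with at least $\lfloor k_t/2\rfloor$ neighbours in $P$. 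Comparing the two bounds forces $n_0=\Omega(n)$, and the pigeonhole principle over the $\binom{k_t}{\lfloor k_t/2\rfloor}$ possible $\lfloor k_t/2\rfloor$-subsets of $V(P)$ yields the set $U$ with at least $k_1+\cdots+k_t$ common neighbours outside $P$. No global structure of $G$ is needed. To repair your proof you would either have to supply a genuine stability theorem for $ex(n,S_r,F)$, or switch to a crossing-count argument of this kind.
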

\begin{proof}
For an integer $k$, let $\mathcal{N}_k(S_r,G-P,P)$ denote the number of those $S_r$'s which have the center in $G-P$ and exactly $k$ leaves (i.e., the vertices of degree one) in $P$. We define $\mathcal{N}_k(S_r,P,G-P)$ similarly. By a direct calculation, one has
\begin{align}
&~\mathcal{N}_r(S_r,G-P,P)+\mathcal{N}_r(S_r,P,G-P)\notag\\
=&~\mathcal{N}(S_r,G)-\mathcal{N}(S_r,G-P)-\mathcal{N}(S_r,P)-\sum_{i=1}^{r-1}\mathcal{N}_{i}(S_r,G-P,P)-\sum_{i=1}^{r-1}\mathcal{N}_{i}(S_r,P,G-P).\label{eq2}
\end{align}

Since $G-P$ is an $(F-P_{k_t})$-free graph of order $n-k_t$, one has $\mathcal{N}(S_r,G-P)\leq ex(n-k_t,S_r,F-P_{k_t})=\mathcal{N}(S_r,G_{F-P_{k_t}}(n-k_t))$ by the induction hypothesis. Further, it is clear that $\mathcal{N}(S_r,P)\leq \mathcal{N}(S_r,K_{k_t})= k_t\binom{k_t-1}{r}$.

For $i\in\{1,2,\cdots,r-1\}$, we notice that the number of $S_{r-i}$'s in $G-P$ is at most $ex(n-k_t,S_{r-i},F-P_{k_t})$ and the number of vertices in $P$ is $k_t$. So by the definition of $\mathcal{N}_{i}(S_r,G-P,P)$, we have $\mathcal{N}_{i}(S_r,G-P,P)\leq \binom{k_t}{i}ex(n-k_t,S_{r-i},F-P_{k_t})$. So by the induction hypotheses, $\mathcal{N}_{i}(S_r,G-P,P)\leq \binom{k_t}{i}\mathcal{N}(S_{r-i},G_{F-P_{k_t}}(n-k_t))$. Similarly, $\mathcal{N}_{i}(S_r,P,G-P)\leq k_t \binom{n-k_t}{i} \binom{k_t-1}{r-i}$. Recall that $\mathcal{N}(S_r,G)= ex(n,S_r,F)\geq \mathcal{N}(S_r,G_F(n))$.

Combining the argument above with \eqref{eq2} yields that
\begin{align}
&~\mathcal{N}_r(S_r,G-P,P)+\mathcal{N}_r(S_r,P,G-P)\notag\\
\geq&~\mathcal{N}(S_r,G_F(n))-\mathcal{N}(S_r,G_{F-P_{k_t}}(n-k_t))-k_t \binom{k_t-1}{r}\notag\\
&~-\sum_{i=1}^{r-1}\binom{k_t}{i}\mathcal{N}(S_{r-i},G_{F-P_{k_t}}(n-k_t))-\sum_{i=1}^{r-1} k_t \binom{n-k_t}{i} \binom{k_t-1}{r-i}\notag\\
=&~\frac{1}{r!}\left(\left\lfloor \frac{k_1}{2}\right\rfloor +\left\lfloor \frac{k_2}{2}\right\rfloor+\cdots+\left\lfloor \frac{k_t}{2}\right\rfloor -1\right) n^r\notag\\
&~- \frac{1}{r!}\left(\left\lfloor \frac{k_1}{2}\right\rfloor +\left\lfloor \frac{k_2}{2}\right\rfloor+\cdots+\left\lfloor \frac{k_{t-1}}{2}\right\rfloor -1\right) n^r+ o(n^{r-1})\notag\\
=&~\frac{1}{r!}\left\lfloor \frac{k_t}{2}\right\rfloor n^r+o(n^{r-1}),\label{eq3}
\end{align}
where the last second equality holds by Lemma \ref{lemn}.

We now consider to derive an upper bound on $\mathcal{N}_r(S_r,G-P,P)+\mathcal{N}_r(S_r,P,G-P)$. Let $n_0$ be the number of those vertices $v$ in $G-P$ satisfying $|N_G(v)\cap V(P)|\geq \left\lfloor {k_t}/{2}\right\rfloor$. Let $G'=(X,Y)$ be the bipartite graph obtained from $G$ by deleting all edges in $E(P)$ and $E(G-P)$. Then $\mathcal{N}_r(S_r,G-P,P)+\mathcal{N}_r(S_r,P,G-P)=\mathcal{N}(S_r,G')$. We label $X$ as $\{1,2,\cdots,k_t\}$ and $Y$ as $\{k_t+1,k_t+2,\cdots,n\}$ so that $d_{G'}(i)\leq d_{G'}(j)$ for $k_t+1\leq i<j\leq n$. For the sake of brevity, let $X'=\{\left\lceil k_t/2\right\rceil +2,\cdots,k_t\}$ and $Y'=\{k_t+1,\cdots,n-n_0\}$.

By the definition of $n_0$, each vertex in $Y'$ has degree at most $\left\lfloor k_t/2\right\rfloor -1$. If  $Y'$ has a vertex $u$ satisfying $N_{G'}(u)\nsubseteq X'$, then we choose $j\in N_{G'}(u)\backslash X'$ and $i\in X'\backslash N_{G'}(u)$. By Lemma \ref{lem3}, $\mathcal{N}(S_r,G')\leq \mathcal{N}(S_r,S_{ij}(G'))$. Hence we can obtain a graph $G''$ by shifting operations such that $N_{G''}(u)\subseteq X'$ for all $u\in Y'$ and $\mathcal{N}(S_r,G')\leq \mathcal{N}(S_r,G'')$.

\begin{figure}[htp]
\begin{center}
\includegraphics[width=120mm]{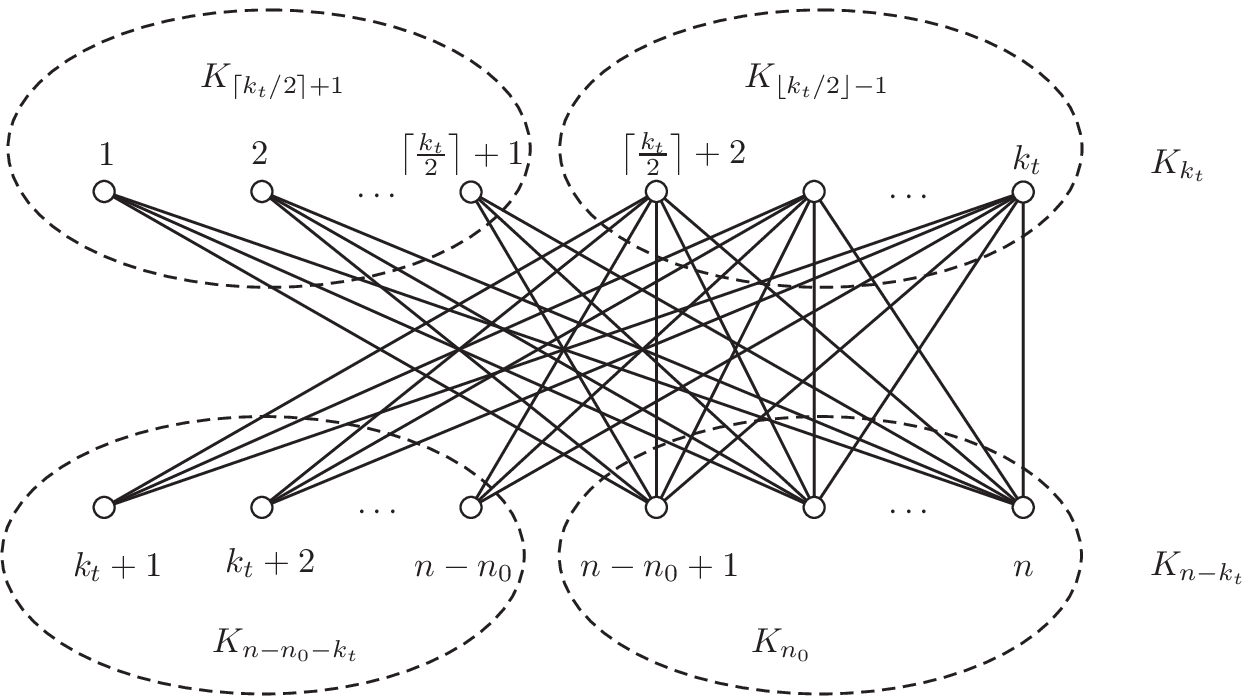}
\caption{$H$}\label{fig2}
\end{center}
\end{figure}

Let $H$ be the graph obtained from the complete bipartite graph $K_{k_t,n-k_t}$ with vertex set $\{1,2,\ldots,n\}$ by deleting the edges in $\{(i,j)\in E(K_{k_t,n-k_t}): 1\leq i\leq \left\lceil k_t/2\right\rceil+1 \quad \text{and} \quad k_t+1\leq j\leq n-n_0\}$, see Figure \ref{fig2}. It clear that $G''\subseteq H$ and, hence, $\mathcal{N}(S_r,G'')\leq \mathcal{N}(S_r,H)$. This means that $\mathcal{N}_r(S_r,G-P,P)+\mathcal{N}_r(S_r,P,G-P)=\mathcal{N}(S_r,G')\leq \mathcal{N}(S_r,G'')\leq \mathcal{N}(S_r,H)$. Therefore,
\begin{align}
&~\mathcal{N}_r(S_r,G-P,P)+\mathcal{N}_r(S_r,P,G-P)\notag\\
\leq &~n_0 \binom{k_t}{r}+ \left(n-k_t-n_0\right) \binom{\left\lfloor \frac{k_t}{2}\right\rfloor-1}{r}+ \left(\left\lfloor \frac{k_t}{2} \right\rfloor-1\right) \binom{n-k_t}{r}+ \left(\left\lceil \frac{k_t}{2}\right\rceil +1\right) \binom{n_0}{r}\notag\\
\leq &~ \left(\binom{k_t}{r}-\binom{\left\lfloor \frac{k_t}{2}\right\rfloor -1}{r}\right) n_0+\frac{1}{r!}\left(\left\lceil \frac{k_t}{2}\right\rceil +1\right)n_{0}^r +\frac{1}{r!}\left(\left\lfloor \frac{k_t}{2}\right\rfloor -1\right) n^r +o(n^{r-1}). \label{eq4}
\end{align}

Combining \eqref{eq4} with \eqref{eq3} yields
$$\left(\binom{k_t}{r}-\binom{\left\lfloor \frac{k_t}{2}\right\rfloor -1}{r}\right) n_0+\frac{1}{r!}\left(\left\lceil \frac{k_t}{2}\right\rceil +1\right)n_{0}^r \geq \frac{1}{r!}n^r+o(n^{r-1}).$$
Since $\binom{k_t}{r}-\binom{\left\lfloor {k_t}/{2}\right\rfloor -1}{r}>0$, we have
$$\left(\binom{k_t}{r}-\binom{\left\lfloor \frac{k_t}{2}\right\rfloor -1}{r}+\frac{1}{r!}\left(\left\lceil \frac{k_t}{2}\right\rceil +1\right)\right)n_{0}^r \geq \frac{1}{r!}n^r+o(n^{r-1}).$$
This implies $n_0\geq c n+o(1)$, where $c$ is a constant depending only on $k_t$ and $r$. In particular, we may have $n_0\geq (k_1+k_2+\cdots+k_t)\binom{k_t}{\lfloor k_t/2\rfloor}$ as long as $n$ is large enough. On the other hand, $V(P)$ has $\binom{k_t}{\left\lfloor{k_t}/{2}\right\rfloor}$ subsets of $\lfloor k_t/2\rfloor$ vertices. So by the Pigeonhole principle, $G-P$ has at least $k_1+k_2+\cdots+k_t$ vertices that are adjacent to $\left\lfloor {k_t}/{2}\right\rfloor$ common vertices in $P$ and, hence, (\ref{U}) follows.

Finally, suppose to the contrary that $F'$ is a copy of $F-P_{k_t}$ in $G-U$. Since $|V(F')|=k_1+k_2+\cdots+k_{t-1}$, so by (\ref{U}), $|(\cap_{u\in U}N_{G}(u))\cap V(G-U-F')|\geq k_t$. This implies that we could construct a copy of $P_{k_t}$ in $G-F'$ by alternately using the vertices in $U$ and their common neighbors. Hence, $G$ contains a copy of $F$, which is a contradiction. Therefore, $G-U$ is $(F-P_{k_t})$-free, which completes our proof.\end{proof}

Let $U'$ be a set of $\left\lfloor {k_t}/{2}\right\rfloor$ vertices of degree $n-1$ in $V(G_F(n))$. Then $G_F(n)-U'\cong G_{F-P_{k_t}}(n-\left\lfloor {k_t}/{2}\right\rfloor)$ by the choice of $k_t$. So by the induction hypothesis, $\mathcal{N}(S_r,G-U)\leq \mathcal{N}(S_r,G_F(n)-U')$ for any fixed $r$ since $G-U$ is $(F-P_{k_t})$-free.

We also note that $\mathcal{N}(S_r,G)=ex(n,S_r,F)\geq \mathcal{N}(S_r,G_F(n))$. Recall that $\mathcal{N}_k(S_r, G-U, U)$ (resp. $\mathcal{N}_k(S_r, U, G-U)$) is the number of those $S_r$'s which have the center in $G-U$ (resp. $U$) and exactly $k$ leaves in $U$ (resp. $G-U$). Then we have
\begin{align*}
&~\mathcal{N}(S_r,G_F(n)-U')\\
\geq &~ \mathcal{N}(S_r,G-U)\\
= &~ \mathcal{N}(S_r,G)-\mathcal{N}(S_r,U)-\sum_{i=1}^{r} \mathcal{N}_i(S_r, G-U, U)-\sum_{i=1}^{r} \mathcal{N}_i(S_r, U, G-U)\\
\geq &~ \mathcal{N}(S_r,G_F(n))- \mathcal{N}(S_r, K_{\lfloor k_t/2\rfloor})-\sum_{i=1}^{r} \binom{\left\lfloor \frac{k_t}{2}\right\rfloor}{i} \mathcal{N}(S_{r-i}, G-U)\\
&~ -\sum_{i=1}^{r} \binom{n-\left\lfloor \frac{k_t}{2}\right\rfloor}{i} \mathcal{N}(S_{r-i},K_{\lfloor k_t/2\rfloor})\\
\geq &~ \mathcal{N}(S_r,G_F(n))- \mathcal{N}(S_r, K_{\lfloor k_t/2\rfloor})-\sum_{i=1}^{r} \binom{\left\lfloor \frac{k_t}{2}\right\rfloor}{i} \mathcal{N}(S_{r-i}, G_F(n)-U')\\
&~ -\sum_{i=1}^{r} \binom{n-\left\lfloor \frac{k_t}{2}\right\rfloor}{i} \mathcal{N}(S_{r-i},K_{\lfloor k_t/2\rfloor})\\
= &~\mathcal{N}(S_r, G_F(n)-U'),
\end{align*}
where the second `$\geq$' holds because $U\subseteq K_{\lfloor k_t/2\rfloor}$  and the last `$=$' holds by a direct calculation.
This implies $\mathcal{N}(S_r,G_F(n)-U')= \mathcal{N}(S_r,G-U)$. So, again by the induction hypothesis, $G-U\cong G_F(n)-U'$. Moreover, $G\subseteq G_F(n)$, which means $G\cong G_F(n)$, as desired.

2). We now prove the second part of the theorem.  Let $G$ be an $F$-free graph satisfying $\mathcal{N}(S_r,G)=ex(n,S_r,F)=ex(n,S_r,tP_3)$. To prove our result, it is sufficient to show $G\cong G_{tP_3}(n)$ when $n$ is large enough. We apply induction on $t$. If $t=2$, then the assertion  holds directly by Lemma \ref{lem2p}. Assume now $t\geq 3$.

In the proof of 1), taking the role of $k_t$ by 3, $P$ by the subgraph induced by an arbitrary $P_3$ and $U$ by a single vertex,  we can obtain the following lemma.
\begin{lem}\label{cla2}
If $n$ is large enough, then $V(P)$ contains a vertex $u$ such that
\begin{equation}\label{u}
\left|N_{G}(u)\cap V(G-P)\right|\geq 3t.
\end{equation}
Moreover, $G-u$ is $(t-1)P_3$-free.
\end{lem}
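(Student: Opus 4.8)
The plan is to run, essentially verbatim, the argument that proves Lemma~\ref{cla1}, using the dictionary $k_t\mapsto 3$ (so $\lfloor k_t/2\rfloor=1$ and $\lceil k_t/2\rceil=2$), $P\mapsto$ the subgraph of $G$ induced by a fixed $P_3$ of $G$, and $U\mapsto\{u\}$ for a single vertex $u\in V(P)$; the only genuine change is that every appeal to the induction hypothesis of part~1) is replaced by that of part~2), namely $ex(m,S_{r'},sP_3)=\mathcal{N}(S_{r'},G_{sP_3}(m))$ with $G_{sP_3}(m)$ the unique extremal graph, valid for all $s<t$ and all large $m$. One first records that $G$ does contain a $P_3$, since $\mathcal{N}(S_r,G)\ge\mathcal{N}(S_r,G_{tP_3}(n))>0=ex(n,S_r,P_3)$ for $r\ge 2$ (a $P_3$-free graph has maximum degree at most $1$).

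First I would establish the lower bound on $\mathcal{N}_r(S_r,G-P,P)+\mathcal{N}_r(S_r,P,G-P)$. Starting from the identity \eqref{eq2} for this $P$, bound $\mathcal{N}(S_r,G-P)$ and each $\mathcal{N}_i(S_r,G-P,P)$ via the induction hypothesis for $(t-1)P_3$ --- legitimate since $G-P$ is $(t-1)P_3$-free of order $n-3$ --- bound $\mathcal{N}(S_r,P)$ and each $\mathcal{N}_i(S_r,P,G-P)$ crudely through $\mathcal{N}(S_r,K_3)$ and binomial coefficients, and use $\mathcal{N}(S_r,G)\ge\mathcal{N}(S_r,G_{tP_3}(n))$. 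The one numerical input needed, supplied by Lemma~\ref{lemnp}, is that the coefficient of $n^r$ in $\mathcal{N}(S_r,G_{sP_3}(n))$ is $(s-1)/r!$; hence $\mathcal{N}(S_r,G_{tP_3}(n))-\mathcal{N}(S_r,G_{(t-1)P_3}(n-3))=\tfrac{1}{r!}n^r+o(n^{r-1})$ and every other subtracted quantity is of strictly smaller order. This yields $\mathcal{N}_r(S_r,G-P,P)+\mathcal{N}_r(S_r,P,G-P)\ge\tfrac{1}{r!}n^r+o(n^{r-1})$, the analogue of \eqref{eq3} for $k_t=3$.

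Next I would set up the auxiliary bipartite graph $G'$ exactly as in Lemma~\ref{cla1} (delete $E(P)$ and $E(G-P)$), let $n_0$ count the vertices of $G-P$ with at least one neighbour in $P$, and observe that for $k_t=3$ the simplifications are benign: the set called $X'$ there is empty, the $n-3-n_0$ lowest-degree vertices of the $Y$-side are already isolated in $G'$, so Lemma~\ref{lem3} is not even needed, and $G'$ embeds into the graph $H$ whose only edges join the three vertices of $P$ to the $n_0$ remaining vertices of $G-P$. Counting stars in $H$ gives the analogue of \eqref{eq4}, $\mathcal{N}_r(S_r,G-P,P)+\mathcal{N}_r(S_r,P,G-P)\le n_0\binom{3}{r}+3\binom{n_0}{r}$. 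Comparing this with the lower bound forces $\bigl(\binom{3}{r}+\tfrac{3}{r!}\bigr)n_0^r\ge\tfrac{1}{r!}n^r+o(n^{r-1})$, so $n_0\ge cn+o(1)$ for a positive constant $c=c(r)$, whence $n_0\ge 9t=3t\binom{3}{1}$ once $n$ is large. Since $|V(P)|=3$, the pigeonhole principle yields a vertex $u\in V(P)$ adjacent to at least $3t$ vertices of $G-P$, which is \eqref{u}. For the last assertion I would argue by contradiction, as at the end of the proof of Lemma~\ref{cla1}: a copy $F'$ of $(t-1)P_3$ in $G-u$ has $3(t-1)$ vertices, so $u$ still has at least $3t-3(t-1)=3\ge 2$ neighbours not in $V(F')$; picking two of them $w_1,w_2$, the path $w_1uw_2$ together with $F'$ is a copy of $tP_3$ in $G$, a contradiction; hence $G-u$ is $(t-1)P_3$-free.

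I do not expect a real obstacle here: once Lemma~\ref{cla1} is available the statement is just a specialisation, and every step above is the same manipulation performed for $k_t=3$ with the part~2) induction hypothesis in place of the part~1) one. The only point that needs a moment's care --- and the only place where part~2) differs arithmetically from part~1) --- is the leading-coefficient computation, i.e.\ that passing from $G_{tP_3}(n)$ to $G_{(t-1)P_3}(n-3)$ drops the coefficient of $n^r$ by exactly $1/r!$, with the $(n-h)\binom{h+1}{r}$ and $\tau_{n,h}\bigl(\binom{h}{r}-\binom{h+1}{r}\bigr)$ terms of Lemma~\ref{lemnp} contributing nothing to order $n^r$; this is a one-line check.
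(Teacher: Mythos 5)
Your proposal is correct and follows essentially the same route as the paper: the same identity \eqref{eq2}, the same use of the part~2) induction hypothesis to get the lower bound $\tfrac{1}{r!}n^r+o(n^{r-1})$ on $\mathcal{N}_r(S_r,G-P,P)+\mathcal{N}_r(S_r,P,G-P)$, and the same contradiction argument for $(t-1)P_3$-freeness. The only difference is cosmetic: where you transplant the $n_0$/pigeonhole machinery of Lemma~\ref{cla1}, the paper shortcuts it by noting that if every vertex of $P$ had at most $3t-1$ neighbours in $G-P$ then both star counts would be bounded by constants independent of $n$, contradicting the $\Theta(n^r)$ lower bound.
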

\begin{proof}
 Let $\mathcal{N}_k(S_r, G-P, P)$ (resp. $\mathcal{N}_k(S_r, P, G-P)$) be the number of those $S_r$'s which have the center in $G-P$ (resp. $P$) and exactly $k$ leaves in $P$ (resp. $G-P$). Then by the induction hypothesis and the similar argument as Lemma \ref{cla1}, we have \begin{align}
&~\mathcal{N}_r(S_r,G-P,P)+\mathcal{N}_r(S_r,P,G-P)\notag\\
\geq&~ \mathcal{N}(S_r,G_{tP_3}(n))-\mathcal{N}(S_r,G_{(t-1)P_3}(n-3))-3 \binom{2}{r}\notag\\
&~-\sum_{i=1}^{r-1}\binom{3}{i}\mathcal{N}(S_{r-i},G_{(t-1)P_3}(n-3))-\sum_{i=1}^{r-1} 3 \binom{n-3}{i} \binom{2}{r-i}\notag\\
=&~\frac{1}{r!}n^r+o(n^{r-1}).\label{eqp3}
\end{align}
Suppose to the contrary that each vertex in $P$ has at most $3t-1$ neighbors in $G-P$. Then $\mathcal{N}_r(S_r,G-P,P)\leq (3t-1)\binom{3}{r}$ and $\mathcal{N}_r(S_r,P,G-P)\leq 3\binom{3t-1}{r}$, which is a contradiction to \eqref{eqp3} when $n$ is large enough. Hence, $P$ contains a vertex $u$ having at least $3t$ neighbors in $G-P$.

Suppose to the contrary that $G-u$ contain a $(t-1)P_3$. By (\ref{u}) we have
$$\left|N_{G}(u)\cap V(G-P-(t-1)P_3)\right|\geq \left|N_{G}(u)\cap V(G-P)\right|-\left| V((t-1)P_3)\right|\geq 3.$$ This means that we could construct a copy of $P_{3}$ in $G-(t-1)P_3$ consisting of $u$ and two of its neighbors, which is a contradiction.
\end{proof}

Let $u'$ be a vertex with degree $n-1$ in $V(G_{tP_3}(n))$. Then $G_{tP_3}(n)-u'\cong G_{(t-1)P_3}(n-1)$. By the induction hypothesis,
\begin{align}\label{eqp4}
\mathcal{N}(S_{i},G-u)\leq \mathcal{N}(S_{i},G_{tP_3}(n)-u')
\end{align}
for $i\in\{r,r-1\}$ as $G-u$ is $(t-1)P_3$-free. Note that $\mathcal{N}(S_r,G)=\mathcal{N}(S_r,G-u)+\mathcal{N}_r(S_r,u,G-u)+\mathcal{N}_1(S_r,u,G-u)$, where $\mathcal{N}_r(S_r,u,G-u)$ is the number of the $S_r$'s with $u$ as the center and $\mathcal{N}_1(S_r,u,G-u)$  the number of the $S_r$'s with $u$ as a leaf. Then we have
\begin{align*}
~&\mathcal{N}(S_r,G_{tP_3}(n)-u')\\
\geq&~ \mathcal{N}(S_r,G-u)\\
= &~ \mathcal{N}(S_r,G)-\mathcal{N}_r(S_r,u,G-u)-\mathcal{N}_1(S_r,u,G-u)\\
\geq&~ \mathcal{N}(S_r,G)-\binom{d_G(u)}{r}-\mathcal{N}(S_{r-1},G-u)\\
\geq&~ \mathcal{N}(S_r,G_{tP_3}(n))- \binom{n-1}{r}-\mathcal{N}(S_{r-1},G_{tP_3}(n)-u')\\
= &~ \mathcal{N}(S_r, G_{tP_3}(n)-u'),
\end{align*}
where the last `$\geq$' holds by \eqref{eqp4}. Hence, $\mathcal{N}(S_r,G_{tP_3}(n)-u')=\mathcal{N}(S_r,G-u)$. So, again by the induction hypothesis, $G-u\cong G_{tP_3}(n)-u'$, which implies $G\subseteq G_{tP_3}(n)$. Then $G\cong G_{tP_3}(n)$ by $\mathcal{N}(S_r,G)=ex(n,S_r,tP_3)$, as desired.

Now we finish the proof of Theorem \ref{th1}. \qed

Finally, we remark that the extremal graphs for $ex(n,K_2,M_h)$ \cite{Erdos2}, $ex(n,K_2, F)$ ($F\neq tP_3$) \cite{Lidicky},  $ex(n, S_r, P_k)$ \cite{Gyori}, $ex(n,K_s,F)$ ($F$ consists of paths of even order) \cite{Zhu}, $ex(n,K_2, \mathcal{L}_{n,k})$
\cite{Wang2,Ning}, $ex(n,K_s, \mathcal{L}_{n,k})$, $ex(n,K^*_{s,t}, \mathcal{L}_{n,k})$ \cite{Zhang} are all isomorphic to $G_F(n)$. On the other hand, it is known that all the graphs involved in the above enumeration, i.e., $K_s, S_r$ and $K^*_{s,t}$, have a common property that a shifting operation does not decrease the numbers of their copies in an $F$-free graph. This leads to a natural problem, as follows:
\begin{pb}
Let $J$ be a graph such that the number of $J$'s in any graph $G$ does not decrease by shifting operation on $G$. Is the extremal graph for $ex(n,J,F)$ isomorphic to $G_F(n)$ for any linear forest $F$?  
\end{pb}

%Based on Lemma \ref{cla1}, our method for determining $ex(n,S_r,F)$ may apply to calculate the number of some graphs more general than $S_r$ in an $F$-free graph.
%A natural considering is to extend $S_r$ to $K_s$, $K^*_{s,t}$ and a bipartite graph, as indicated in the following problem:
%\begin{pb}
%For any linear forest $F$, does $ex(n,J,F)=\mathcal{N}(J,G_F(n))$ hold, where $J$ is one of $K_s$, $K^*_{s,t}$ or a bipartite graph?
%\end{pb}
\section{Acknowledgements}
This work was supported by the National Natural Science Foundation of China [Grant numbers, 11971406, 12171402]

\end{document}